\newtheorem{theorem}{Theorem}[section]
\newtheorem{lemma}[theorem]{Lemma}
\newtheorem{corollary}[theorem]{Corollary}
\newtheorem{proposition}[theorem]{Proposition}
\DeclareMathOperator{\sgn}{sgn}
\begin{document}

\title[Few Long Lists] {Few Long Lists for Edge Choosability\\ of Planar Cubic Graphs}

\author{Luis Goddyn}
\address{Department of Mathematics, Simon Fraser University, Burnaby, BC, Canada}
\email { goddyn@sfu.ca}
 
\author{Andrea Spencer}
\address{Department of Mathematics, Simon Fraser University, Burnaby, BC, Canada}
\email { ams33@sfu.ca}

\thanks{Supported by a Canada NSERC Discovery Grant.}

\keywords{edge list coloring, edge choosbility, four color theorem, polynomial method, Nullstellensatz}

%\date{\today}
\date{October 20, 2012}

\begin{abstract}
It is known that every loopless cubic graph is $4$-edge choosable.
We prove the following strengthened result.

Let $G$ be a planar cubic graph having $b$ cut-edges.
There exists a set $F$ of at most $\frac52b$ edges of $G$ with the following property.
For any function $L$ which assigns to each edge of $F$ a set of $4$ colours
and which assigns to each edge in $E(G)-F$ a set of $3$ colours, the graph $G$ has a proper edge colouring
where the colour of each edge $e$ belongs to $L(e)$.
\end{abstract}

\maketitle

\section{Introduction}

We assume here that graphs are finite and loopless.
An \emph{edge list assignment} for a graph $G$ is a function $L$ which maps each edge of $G$
a set of colours.   An \emph{$L$-edge colouring} is a proper edge colouring $c$ of $G$ for which $c(e) \in L(e)$ for each $e \in E(G)$.
Let $f : E(G) \to \mathbb N$ be an edge weighting with positive integers. 
We say that $G$ is \emph{$f$-edge choosable} if $G$ has an $L$-edge colouring for every edge list assignment $L$ satisfying $|L(e)| \ge  f(e)$ for each $e\in E(G)$.
For $k \in \mathbb N$, we say that $G$ is \emph{$k$-edge choosable} if $G$ is $f$-edge choosable for some $f$ satisfying $\max f \le k$.
If, additionally, $G$ has a most $s$ edges $e$ for which $f(e)= k$, then we say that $G$ is 
\emph{$s$-nearly $(k-1)$-edge choosable}.
We consider the problem finding a good upper bound on the quantity
\[
s(G,k) := \min \{ s \;\vert\;  \text{ $G$ is $s$-nearly $k$-edge choosable}\}.
\]

The notion of \emph{vertex choosability} is defined similarly, with reference to vertex colouring. 
An extension \cite{ERT} of Brooks' theorem to vertex choosability asserts that
every simple connected graph $H$ with maximum degree~$\Delta$ is $\Delta$-vertex choosable unless $H$ is a complete graph or a cycle.
Applying this to the line-graph of a cubic graph, it follows that every loopless cubic graph is $4$-edge choosable.
However, a typical cubic graph $G$ is 
$s$-nearly $3$-edge choosable where $s$ is somewhat smaller than the order of $G$.
For example, a cubic graph $G$ satisfies $s(G,3)=0$ (that is, $G$ is $3$-edge choosable) if $G$ is either bipartite \cite{Galvin}, or planar and 2-connected \cite{Jaeger}.
The latter result strengthens the 4-colour theorem.
If $G$ has a cut-edge, then $s(G,3)>0$ since $G$ is not 3-edge colourable.
One easily constructs cubic graphs $G$ having $b$ cut-edges for which $s(G,3) \ge 2b$.
For example, if each connected component of $G$ has exactly one cut-edge, 
then $G$ is $f$-edge choosable only if each of the $2b$ leaf-blocks of $G$ contains at least one edge $e$ for which $f(e) \ge 4$.
(This is because no leaf block is $3$-edge colourable).
In this paper we show that planar cubic graphs $G$ satisfy $s(G,3) \le \frac52b$.

\begin{theorem}
\label{thm:main}
Let $G$ be a planar cubic graph having $b$ cut-edges.
Then $G$ is $f$-edge choosable for some function $f:E(G) \to \{1,2,3,4\}$ that has 
average value $3$, and $|f^{-1}(4)| \le \frac52b$.
\end{theorem}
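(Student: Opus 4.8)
The plan is to use the polynomial method. Introduce a variable $x_e$ for each edge $e$ of $G$ and work with the graph polynomial of the line graph, $P_G=\prod(x_e-x_{e'})$, the product running over all unordered pairs of edges of $G$ sharing an endpoint. Because $G$ is cubic, this polynomial factors over the vertices: fixing at each vertex $v$ a reference ordering $(e^v_1,e^v_2,e^v_3)$ of its three incident edges, $P_G=\pm\prod_{v\in V(G)}\vand(x_{e^v_1},x_{e^v_2},x_{e^v_3})$, a product of $3$-variable Vandermonde determinants. By the Combinatorial Nullstellensatz it suffices to produce an exponent vector $(d_e)_{e\in E(G)}$ with $d_e\le 3$ for every $e$ and $|\{e:d_e=3\}|\le\frac52 b$ such that the coefficient of $\prod_e x_e^{d_e}$ in $P_G$ is nonzero; then $f(e)=d_e+1$ works, and since $P_G$ is homogeneous of degree $\sum_v\binom{3}{2}=3|V(G)|=2|E(G)|$, a nonzero coefficient forces $\sum_e d_e=2|E(G)|$, so $f$ automatically has average $3$.

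Expanding each factor as $\vand(x_{e^v_1},x_{e^v_2},x_{e^v_3})=\sum_{\sigma\in S_3}\sgn(\sigma)\,x_{e^v_1}^{\sigma(1)-1}x_{e^v_2}^{\sigma(2)-1}x_{e^v_3}^{\sigma(3)-1}$, the coefficient of $\prod_e x_e^{d_e}$ equals, up to an overall sign, $\sum_h\prod_v\sgn(h_v)$, where $h=(h_v)_v$ ranges over all families in which each $h_v$ is a bijection from the edges at $v$ onto $\{0,1,2\}$ subject to $h_u(e)+h_v(e)=d_e$ for every edge $e=uv$, and $\sgn(h_v)$ is the sign of the permutation that $h_v$ induces on the reference order at $v$. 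For the constant target $d_e\equiv 2$ the constraint forces the edges receiving label $1$ to form a perfect matching $M$ and forces the labels to alternate $0,2,0,2,\dots$ along every cycle of $G-M$ (so such an $M$ must have $G-M$ bipartite); hence the admissible families $h$ correspond to proper $3$-edge-colourings of $G$ together with an orientation of each bichromatic cycle, and the coefficient is a Penrose-type signed count of Tait colourings. In the base case $b=0$ the graph is bridgeless, hence $2$-connected (a cut-vertex of a cubic graph always produces a bridge), and this signed count is nonzero: this strengthens the Four Colour Theorem and is essentially the known $3$-edge-choosability of $2$-connected planar cubic graphs \cite{Jaeger}, with the signs made coherent by reading the reference orders off the rotation system of a fixed plane embedding. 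So for $b=0$ the target $d_e\equiv2$ already gives $f\equiv3$.

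For $b\ge1$ I would induct on $b$ using the bridge/block structure of $G$. Pick a bridge $e^*=uv$ — for instance one incident with a leaf of the bridge-tree — and modify the target in a bounded neighbourhood of $e^*$: raise a few $d_e$ from $2$ to $3$ on edges incident with $u$ and $v$, and compensate by lowering a few $d_e$ from $2$ to $\le1$ slightly further in, the choice being made so that the constraint $h_u(e)+h_v(e)=d_e$ now \emph{decouples} across $e^*$, i.e.\ the signed sum $\sum_h\prod_v\sgn(h_v)$ for the modified target factors into the analogous signed sums of two planar cubic graphs each with strictly fewer bridges, to which the inductive hypothesis applies; the role of the boosts is precisely to absorb on each side the parity obstruction (the classical reason a cubic graph with a bridge is not $3$-edge-colourable). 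Finally the number of boosted edges is bounded by an amortised count over the bridge-tree: a bridge treated in isolation costs $2$ boosted edges, whereas certain configurations — e.g.\ a bridge whose two ends both lie in leaf blocks — force its two ends to be handled jointly at a cost of $5$, i.e.\ $\frac52$ per bridge, and summing over all bridges gives $|f^{-1}(4)|\le\frac52 b$.

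I expect the main obstacle to be controlling cancellation in the signed sum during the bridge surgery: one must choose which edges to boost (and which to lower) so that the parity defect a bridge forces on one side exactly matches the defect on the other, so that the sum genuinely factors rather than being a sum of products that could vanish, and so that the signs coming from the plane rotation system remain consistent throughout. Intertwined with this is the bookkeeping needed to keep the count of boosted edges down to $\frac52$ per bridge rather than the $3$ that a naive treatment of a single bridge would give; getting that constant right is where the pairing/amortisation argument over the bridge-tree must be done carefully.
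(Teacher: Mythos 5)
Your setup---the factored graph polynomial, the vertex-local expansion into signed bijections onto $\{0,1,2\}$, and the base case $b=0$ via the signed Penrose-type count for $2$-connected planar cubic graphs---matches the paper's framework. The gap is in the inductive ``bridge surgery,'' which is also not the route the paper takes (there is no induction on $b$ in the paper at all). The difficulty you flag but do not resolve is the decisive one: once bridges are removed or decoupled, the pieces contain paths of degree-$2$ vertices (the paper's \emph{threads}), and the sign-reversing involution that cancels the bad terms for the constant target---flipping the labels $0$ and $2$ around an odd cycle of the complementary $2$-factor---no longer preserves the exponent vector. On a nontrivial thread, interchanging the type-$(0,2)$ and type-$(2,0)$ labellings transfers a unit of weight from one end of the thread to the other, so the cancelling partners live in \emph{different} coefficients. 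Hence no single exponent vector $(d_e)$ can be certified nonzero by this cancellation, and your plan commits to a single target. The paper's key new ingredient, which your proposal lacks, is to work with an entire family $\mathcal W=\{w_{\mathcal S}\}$ of $2^{|\mathcal T^D_{\mathrm{odd}}|}$ exponent vectors simultaneously: labellings whose complementary $2$-factor in the derived graph is bipartite all get sign $+1$ (by the Ellingham--Goddyn sign lemma), the remaining labellings cancel in pairs \emph{across} members of $\mathcal W$, the total is positive because a reference labelling exists (this is where the Four Colour Theorem enters, to produce a perfect matching of the derived graph with bipartite complement), and one concludes only that \emph{some} unspecified $w_{\mathcal S}$ has a nonzero coefficient. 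Your hoped-for factorization of the signed sum across a bridge is, by contrast, exactly the kind of identity that the odd-cycle cancellation obstructs.

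The constant $\tfrac52$ is also obtained differently from your amortisation sketch. The paper roots the block tree at a proper block, notes that each cut-edge contributes exactly twice to the total length of the weight-perturbed threads and that the contribution on the side farther from the root always lands in the $w_{11}$ class (giving $m_{11}\ge m_{02}+m_{20}$, hence $n_{11}+n_{02}+n_{20}\le 2b$), and then averages $|w^{-1}(3)|=n_{11}+n_{02}+2n_{20}$ against the corresponding count $n_{11}+2n_{02}+n_{20}$ for a second, head--tail-reversed weighting $w'$ whose coefficient has the same absolute value as that of $w$. That averaging between two admissible weightings is what improves the naive bound to $\tfrac52 b$; a per-bridge accounting of ``boosted'' edges of the kind you describe would need to reproduce this pairing to reach the stated constant.
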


\section{The Polynomial Method}
Let $e_1, e_2, \dots , e_m$ be the edges of a graph $G$, and let $x_i$ be an indeterminate associated with the edge $e_i$.
The \emph{edge monomial} of $G$ is the polynomial in $\mathbb R[x_1,\dots, x_m]$ defined by
\begin{equation}
\label{eq:monomial}
\epsilon(G) = \prod_{1 \le i < j \le m} (x_i-x_j)^{c(i,j)} .
\end{equation}
Here $c(i,j) \in \{0,1,2\}$ is the number of vertices incident to both $e_i$ and $e_j$.
Note that $\epsilon(G)$ is a homogeneous polynomial of degree $e(G)$.
Furthermore, $\epsilon(G)$ is well defined (up to negation) regardless of the edge ordering $e_1,\dots, e_m$.
Each term in the standard expansion of $\epsilon(G)$ takes the form $\alpha_w x^w := \alpha_w \prod_{1 \le i \le m} x_i^{w(e_i)}$
where the exponent function $w:E(G) \to \{0,1,\dots\}$ is a nonnegative integer weighting of the edges of $G$.
We shall write $w+\mathbf 1$ for the function $e \mapsto w(e)+1$.
The Combinatorial Nullstellensatz \cite{Alo, AloTar} for edge choosability asserts the following.
%%%%%%
\begin{lemma}
\label{lem:CN}
Let $G$ be a loopless graph, and let $\alpha_w x^w$ be a nonzero term in the expansion of~$\epsilon(G)$.
Then $G$ is $f$-edge choosable, where $f=w+\mathbf 1$.
\end{lemma}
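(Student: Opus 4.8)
The plan is to read this off from Alon's Combinatorial Nullstellensatz, so the first step is to record the version I will invoke: if $P \in \mathbb R[x_1,\dots,x_m]$ is a polynomial of total degree $\sum_{i=1}^m t_i$ in which the coefficient of $\prod_{i=1}^m x_i^{t_i}$ is nonzero, then for all sets $S_1,\dots,S_m \subseteq \mathbb R$ with $|S_i| \ge t_i+1$ there is a point $(a_1,\dots,a_m) \in S_1 \times \cdots \times S_m$ at which $P$ does not vanish. I will apply this with $P = \epsilon(G)$, with $t_i = w(e_i)$, and, given an arbitrary edge list assignment $L$ satisfying $|L(e)| \ge f(e)$, with $S_i = L(e_i)$.

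The first ingredient is the dictionary between non-vanishing of $\epsilon(G)$ and proper edge colourings. After relabelling colours I may assume that each $L(e_i)$ is a finite set of real numbers. Suppose $(a_1,\dots,a_m)$ is a point with $a_i \in L(e_i)$ for every $i$ and with $\epsilon(G)(a_1,\dots,a_m) \ne 0$. Then every factor $(a_i-a_j)^{c(i,j)}$ of $\epsilon(G)$ is nonzero; in particular, whenever $e_i$ and $e_j$ are adjacent we have $c(i,j) \ge 1$, and hence $a_i \ne a_j$. Thus $e_i \mapsto a_i$ is a proper edge colouring of $G$ in which each edge receives a colour from its own list, that is, an $L$-edge colouring. (Looplessness is used here only to ensure that ``proper edge colouring'' is a meaningful notion and that $\epsilon(G)$ records exactly the relevant constraints.)

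The second ingredient produces such a point. Since $\epsilon(G)$ is homogeneous, the hypothesis that $\alpha_w x^w$ is a nonzero term of its expansion forces $\sum_{i=1}^m w(e_i) = \deg \epsilon(G)$, so the degree requirement of the Combinatorial Nullstellensatz is met with $t_i = w(e_i)$; the list-size requirement is met because $|L(e_i)| \ge f(e_i) = w(e_i)+1 = t_i+1$; and $\alpha_w \ne 0$ provides the needed nonzero coefficient. Hence there is a point $(a_1,\dots,a_m) \in \prod_{i=1}^m L(e_i)$ with $\epsilon(G)(a_1,\dots,a_m) \ne 0$, which by the previous paragraph is an $L$-edge colouring. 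Since $L$ was an arbitrary list assignment with $|L(e)| \ge f(e)$, the graph $G$ is $f$-edge choosable.

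There is no genuinely hard step in this argument: essentially all of the content is in quoting the Combinatorial Nullstellensatz and lining up its hypotheses with the data $\epsilon(G)$ and $f = w+\mathbf 1$. The only places that merit a sentence of care are that the colours may be taken to be real numbers, so that it is the real coefficients of $\epsilon(G)$ (rather than its image over some field of positive characteristic) that govern choosability, and that a single nonzero term of a \emph{homogeneous} polynomial automatically has total degree $\deg \epsilon(G)$, which is precisely the condition the theorem requires.
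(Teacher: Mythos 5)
Your argument is correct and is exactly the standard derivation that the paper delegates to its citations (Alon, Alon--Tarsi): the paper states Lemma~\ref{lem:CN} without proof as a known consequence of the Combinatorial Nullstellensatz. You line up the hypotheses properly --- colours as reals, homogeneity of $\epsilon(G)$ giving the total-degree condition, and non-vanishing at a point of $\prod_i L(e_i)$ yielding a proper $L$-edge colouring --- so there is nothing to add.
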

%%%%%%%
The \textit{polynomial method} for proving that $G$ is $f$-edge choosable typically involves
selecting an exponent function $w$ satisfying $w+ \mathbf 1 \le f$, and showing that $\alpha_w \ne 0$.
To evaluate $\alpha_w$, Ellingham and Goddyn \cite{ElGo} provide a
combinatorial interpretation of $\alpha_w$ in terms of \emph{star  labellings} which we describe below.
%(For the curious, their interpretation is arrived at by the following steps: we partition the factors in \eqref{eq:monomial} according to incident vertices, and express the contribution of each part as a Vandermonde determinant. Star  labellings arise from terms of the Leibnitz expansions of these determinants.)
Let $v$ be a vertex of degree $d$ in $G$.
A \emph{star labelling at~$v$} is a bijective function $\pi_v$ from the edges incident with $v$ to the integers $\{0,1,\dots,d-1\}$.
A \emph{star labelling} of $G$ is a set $\pi = \{\pi_v : v \in V(G)\}$ where each $\pi_v$ is a star  labelling at~$v$.
The \emph{exponent} of a star labelling $\pi$ is the edge weighting $w=w_\pi$ defined by $w(e_i) = \pi_u(e_i) + \pi_v(e_i)$, for $e_i=uv\in E(G)$.
The \emph{sign}, $\sgn (\pi_v)$, of a star  labelling at $v$ is the sign of the permutation of $\{0,1,\dots , d-1\}$ defined by
$j \mapsto \pi_v( e_{i_j})$, where $e_{i_0}, e_{i_1}, \dots, e_{i_{d-1}}$ are the edges incident with $v$,
and $i_0 < i_1 < \dots < i_{d-1}$.
The \emph{sign} of a star labelling of $G$ is defined by $\sgn (\pi) = \prod_{v\in V(G)} \sgn(\pi_v)$.
\begin{lemma}[\cite{ElGo}]
\label{lem:combInterp}
For any loopless graph $G$ we have 
\begin{equation}
\label{eq:combInterp}
\epsilon(G) = \sum_\pi \sgn (\pi) \, x^{w_\pi},
\end{equation}
where the sum is taken over the set of star  labellings of $G$.
\end{lemma}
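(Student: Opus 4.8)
The plan is to rewrite $\epsilon(G)$ as a product of Vandermonde polynomials, one for each vertex, and then obtain \eqref{eq:combInterp} by expanding each factor with the Leibniz determinant formula and multiplying out. The one genuinely non-obvious step is recognizing the factorization; everything after it is mechanical.

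First I would sort the linear factors of \eqref{eq:monomial} by the vertex that produces them. Since $c(i,j)$ is exactly the number of vertices incident with both $e_i$ and $e_j$, we have $(x_i-x_j)^{c(i,j)}=\prod_{v\in e_i\cap e_j}(x_i-x_j)$, and hence
\[
\epsilon(G)\;=\;\prod_{v\in V(G)}\ \prod_{\substack{e_i,\,e_j\,\ni\,v\\ i<j}}(x_i-x_j)\;=\;\prod_{v\in V(G)}\vand_v,
\]
where, if $v$ has degree $d$ with incident edges $e_{i_0},\dots,e_{i_{d-1}}$ and $i_0<\dots<i_{d-1}$, then $\vand_v:=\prod_{0\le a<b\le d-1}(x_{i_a}-x_{i_b})$ is the Vandermonde polynomial in the variables of the edges at $v$. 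Looplessness ensures every edge has two distinct ends, which I use at the end.

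Next I would expand each $\vand_v$. Up to the global sign $(-1)^{\binom d2}$ relating it to the Vandermonde determinant $\det\bigl(x_{i_a}^{\,b}\bigr)_{0\le a,b\le d-1}$, the Leibniz formula gives $\vand_v=\pm\sum_{\sigma}\sgn(\sigma)\prod_{a=0}^{d-1}x_{i_a}^{\,\sigma(a)}$, the sum over permutations $\sigma$ of $\{0,1,\dots,d-1\}$. Such a $\sigma$ is precisely a star labelling $\pi_v$ at $v$, via $\pi_v(e_{i_a})=\sigma(a)$: its monomial $\prod_a x_{i_a}^{\sigma(a)}$ equals $\prod_{e\ni v}x_e^{\pi_v(e)}$, and $\sgn(\sigma)$ is, by definition, the sign of the permutation $j\mapsto\pi_v(e_{i_j})$, i.e.\ $\sgn(\pi_v)$. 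Multiplying these expansions over all vertices, a term of the product is a choice of one star labelling at each vertex, i.e.\ a star labelling $\pi=\{\pi_v\}$ of $G$; its coefficient is $\prod_v\sgn(\pi_v)=\sgn(\pi)$ and its monomial is
\[
\prod_{v\in V(G)}\ \prod_{e\ni v}x_e^{\pi_v(e)}\;=\;\prod_{e=uv\in E(G)}x_e^{\pi_u(e)+\pi_v(e)}\;=\;x^{w_\pi}.
\]
Collecting equal monomials yields \eqref{eq:combInterp}.

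The only place demanding care is the signs, and I expect no real obstruction beyond a routine verification there. The per-vertex factors $(-1)^{\binom{\deg v}2}$ combine into a single global sign $(-1)^{\sum_v\binom{\deg v}2}$, which is harmless because $\epsilon(G)$ is only defined up to negation and the polynomial method uses only whether a coefficient is nonzero; and one must check that the orientation picked for each Vandermonde determinant is the one matching the stated definition of $\sgn(\pi_v)$, so that no $\pi$-dependent sign gets lost (with $\sigma$ and $\pi_v$ identified as above, this holds).
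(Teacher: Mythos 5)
Your proof is correct, and since the paper cites Lemma~\ref{lem:combInterp} from \cite{ElGo} without reproducing a proof, there is nothing internal to compare against; your argument (factoring $\epsilon(G)$ into per-vertex Vandermonde polynomials, expanding each via the Leibniz formula, and identifying permutations at $v$ with star labellings $\pi_v$) is precisely the standard argument of Ellingham and Goddyn. Your handling of the global sign $(-1)^{\sum_v \binom{\deg v}{2}}$ is also the right call: it is independent of $\pi$, so all relative signs --- which are what Section~5 of the paper actually uses --- are preserved, and the overall ambiguity is absorbed by the fact that $\epsilon(G)$ is only defined up to negation.
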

The reader should notice that, up to negation, the edge monomial $\epsilon(G)$ does not depend on the particular ordering $e_1, e_2,\dots,e_m$ of $E(G)$.
A novel feature of this paper is our use of several coefficients of $\epsilon(G)$ in the polynomial method.
If a set of coefficients $\{ \alpha_{w_i} \mid\; 1 \le i \le k\}$ has a nonzero sum, then at least one coefficient $\alpha_{w_i}$ is not zero.
This gives a multi-term version of Lemma \ref{lem:CN}.
\begin{corollary}
\label{cor:interp}
Let $\mathcal W = \{w_1, \dots w_k\}$ be a set of edge weightings for $G$, and let $\Pi (\mathcal W)$ be the set of star labellings $\pi$ of $G$
such that the exponent of $\pi$ is a member of $\mathcal W$.
If the integer 
\begin{equation}
\label{eq:Wsum}
\sum_{\pi\in \Pi(\mathcal W)} \sgn(\pi)
\end{equation}
 is not zero, 
 then $G$ is $(w_i+\mathbf 1)$-edge choosable, for some $i\in \{1,2,\dots,k\}$.
 \end{corollary}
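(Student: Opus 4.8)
The plan is to reduce the statement directly to Lemma~\ref{lem:CN}, using Lemma~\ref{lem:combInterp} to rewrite the sum \eqref{eq:Wsum} as a sum of honest coefficients of $\epsilon(G)$. First I would expand the edge monomial in the monomial basis, $\epsilon(G)=\sum_w \alpha_w x^w$, where the outer sum ranges over all nonnegative integer edge weightings $w$ of $G$ and only finitely many $\alpha_w$ are nonzero. Comparing this with \eqref{eq:combInterp} and collecting the star labellings that share a common exponent gives the identity $\alpha_w=\sum_{\pi:\,w_\pi=w}\sgn(\pi)$ for every $w$; equivalently, each star labelling $\pi$ contributes its sign to exactly one coefficient, namely the one indexed by $w=w_\pi$.

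Next I would use the fact that $\mathcal W=\{w_1,\dots,w_k\}$ is a \emph{set}, so its members are pairwise distinct. Therefore the collection $\Pi(\mathcal W)$ of star labellings whose exponent lies in $\mathcal W$ is the disjoint union, over $i\in\{1,\dots,k\}$, of the fibres $\{\pi:\,w_\pi=w_i\}$, and consequently
\[
\sum_{\pi\in\Pi(\mathcal W)}\sgn(\pi)\;=\;\sum_{i=1}^{k}\;\sum_{\pi:\,w_\pi=w_i}\sgn(\pi)\;=\;\sum_{i=1}^{k}\alpha_{w_i}.
\]
Thus the hypothesis of the corollary says exactly that $\alpha_{w_1}+\dots+\alpha_{w_k}\neq 0$.

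Finally I would conclude: a sum of integers that is nonzero cannot have all of its summands equal to $0$, so $\alpha_{w_i}\neq 0$ for at least one index $i$. Fixing such an $i$, the monomial $\alpha_{w_i}x^{w_i}$ is a nonzero term in the standard expansion of $\epsilon(G)$, so Lemma~\ref{lem:CN} applies with $f=w_i+\mathbf 1$, yielding that $G$ is $(w_i+\mathbf 1)$-edge choosable, as required. I do not expect a genuine obstacle here; the corollary is essentially a formal consequence of Lemmas~\ref{lem:CN} and~\ref{lem:combInterp}. The only point deserving (minor) care is the bookkeeping in the second step --- one must know the $w_i$ are distinct, so that the exponent fibres over $\mathcal W$ are disjoint and the star-labelling sum equals the plain coefficient sum $\alpha_{w_1}+\dots+\alpha_{w_k}$ rather than some weighted combination of the $\alpha_{w_i}$.
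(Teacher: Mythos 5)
Your proposal is correct and follows essentially the same route as the paper: partition $\Pi(\mathcal W)$ into the fibres $\Pi(w_i)$, observe via Lemma~\ref{lem:combInterp} that each fibre sum is the coefficient $\alpha_{w_i}$, conclude that some $\alpha_{w_i}\neq 0$, and apply Lemma~\ref{lem:CN}. The only (immaterial) difference is order of steps --- the paper first deduces that some fibre sum is nonzero and then identifies it with a coefficient, while you identify all fibre sums with coefficients first.
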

\begin{proof}
Let $\Pi(w_i)$ be the set of star labellings of $G$ having exponent $w_i$.
If \eqref{eq:Wsum} is not zero, then 
$\sum_{\pi\in \Pi(w_i)} \sgn(\pi) \ne 0$, for some $i \in \{1,2,\dots k\}$.
By Lemma \ref{lem:combInterp} this last sum equals the coefficient
of the term $\alpha_{w_i} x^{w_i}$ in the expansion of $\epsilon(G)$,
and the result follows from Lemma~\ref{lem:CN}.
\end{proof}

To prove our main result, we will construct an appropriate set $\mathcal W$ of edge weightings of a planar cubic graph $G$,
and show that the signed sum \eqref{eq:Wsum} is positive.

\section{Weightings and Star  labellings of Threads}
\label{sec:threads}
A \emph{flag} of a graph $G$ is a pair $(v, e)\in V(G) \times E(G)$ whose members are incident in $G$.
We may write $ve$ instead of $(v,e)$.
It is convenient to regard a star  labelling of $G$
to be a nonnegative integer function $\pi : F(G) \to \{0,1,\dots, \Delta(G)-1\}$,
where $F(G)$ is the set of flags of $G$.
For $m\ge 0$, a \emph{thread of order $m$} is the graph $T_m$ obtained from a path $v_0e_0v_1e_1 \dots e_{m} v_{m+1}$
by adding new vertices $w_k$
and new edges $f_k = v_k w_k$ ($1 \le k \le m$).
See Figure \ref{fig:threadWeights}.
In particular the \emph{trivial thread} $T_0$ is the path $v_0 e_0 v_1$.
The \emph{head} of $T_m$ is  the flag $v_0 e_0$,
the \emph{tail} of $T_m$ is the flag $v_{m+1}e_{m}$, and
the \emph{feet} of $T_m$ are the flags $w_k f_k$ ($1 \le k \le m$).
A function $\pi : F(T_m) \to \{0,1,2\}$ is called a \emph{prestar  labelling of $T_m$}
if the restricted function $\pi_{v_k} := \pi \upharpoonright_{\{v_k e_{k-1}, v_k e_{k}, v_k f_k\}}$
 is a star  labelling of $v_k$,  for $1 \le k \le m$.
 The \emph{sign} of  a prestar labelling $\pi$ is defined to be $\sgn(\pi)=\prod_{k=1}^m \sgn(\pi_{v_k})$,
and the \emph{exponent} of $\pi$ is the edge weighting $w$ 
where $w(e) = \pi(ue) + \pi(ve)$ for each  $e=uv \in E(T_m)$.
A prestar labelling $\pi$ is \emph{1-footed} if $\pi(w_k f_k)=1$, for $1 \le k \le m$.
We say that  $\pi$ has \emph{type $(i,j)$} if $\pi(v_0 e_0)=i$ and $\pi(v_{m+1} e_{m})=j$.
We are interested in classifying, according to type,  the set of $1$-footed prestar labellings of $T_m$
which have a prespecified exponent.

For each $m\ge0$ we define four special edge weightings of $T_m$, which we denote by $w_{\mathbf 2}$, $w_{11}$, $w_{02}$ and $w_{20}$.
These are illustrated in Figure \ref{fig:threadWeights}.
The weighting $w_{\mathbf 2}$ is just the constant function $w(e) \equiv 2$.
The next three weightings are defined only for $m \ge 1$.
The weighting $w_{11}$ is obtained from  $w_{\mathbf 2}$
by \emph{transferring one unit of weight} from $e_{0}$ to $e_m$.
That is, we have  $w_{11}(e_{0})=1$,  $w_{11}(e_m)=3$,  and $w_{11}(e)=2$ for $e  \in E(T_m)-\{e_0,e_{m}\}$.
The weighting $w_{02}$ is obtained from $w_{\mathbf 2}$
by transferring one unit of weight from $e_0$ to $f_1$.
The weighting $ w_{20}$ is obtained from $w_{\mathbf 2}$
by transferring one unit of weight from $e_{1}$ to $e_0$,
and then transferring one unit of weight from $e_{1}$ to $f_1$.

As shown in Figure \ref{fig:threadWeights}, each special weighting is associated with one or more $1$-footed prestar labellings of $T_m$.
Each labelling is denoted by either $\rho_{ij}$, $\pi_{ij}$ or $\pi'_{ij}$ where $(i,j)$ is its type.

\begin{figure}
\hspace{-63mm}\includegraphics[width=103mm]{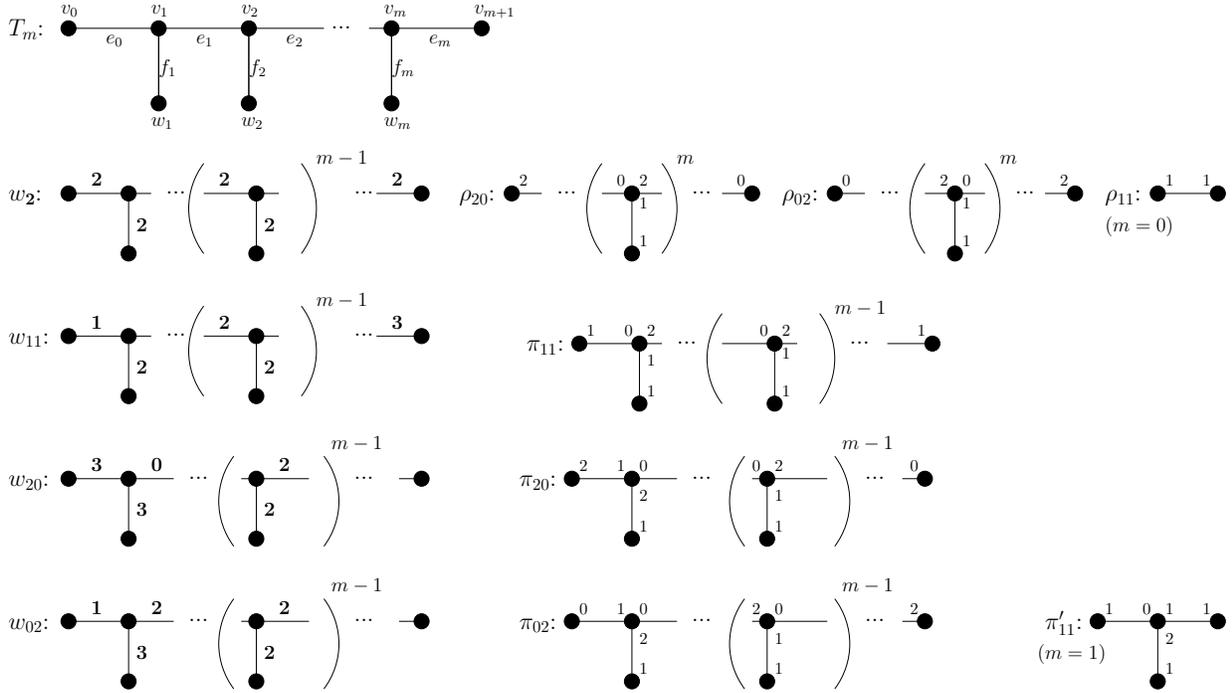}
\caption{The thread $T_m$,  four edge weightings $w_{\mathbf 2}$ and $w_{ij}$, and all $1$-footed prestar labellings, $\rho_{ij}$, $\pi_{ij}$ and $\pi'_{11}$, whose exponents are one of those weights.}
\label{fig:threadWeights}
\end{figure}

\begin{lemma}    \label{lem:possibleStars}
There are three prestar labellings $\rho_{20}$, $\rho_{02}$ and $\rho_{11}$ of $T_0$ with exponent $w_{\mathbf 2}$.
For $m\ge 1$,  $\rho_{20}$ and $\rho_{02}$ are the only $1$-footed prestar labellings of $T_m$ having exponent $w_{\mathbf 2}$.
Let $m\ge1$ and let $(i,j) \in \{(1,1), (2,0), (0,2)\}$.
If $(m,i,j) \ne (1,0,2)$, then $\pi_{ij}$ is the unique $1$-footed prestar labelling of $T_m$ having exponent $w_{ij}$.
If $(m,i,j)=(1,0,2)$, then $T_1$ has exactly two 1-footed prestar labellings with exponent $w_{02}$, namely $\pi_{02}$ and $\pi'_{11}$.
\end{lemma}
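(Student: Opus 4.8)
The plan is a direct, finite case analysis resting on the observation that for each of the four prescribed exponents almost every value of $\pi$ is forced. First I would dispose of $T_0$: since $T_0$ has no internal vertex and no foot, a prestar labelling of $T_0$ is simply a pair $(\pi(v_0 e_0),\pi(v_1 e_0))\in\{0,1,2\}^2$ with no further condition, and it is automatically $1$-footed; the exponent being $w_{\mathbf 2}$ means these two numbers sum to $2$, so the solutions are $(0,2),(1,1),(2,0)$, which are $\rho_{02},\rho_{11},\rho_{20}$. This settles the first assertion.

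Now fix $m\ge1$ and let $\pi$ be a $1$-footed prestar labelling of $T_m$ whose exponent $w$ is one of $w_{\mathbf 2},w_{11},w_{02},w_{20}$. The first step is to read off the feet: since $\pi(w_k f_k)=1$ and $w(f_k)\in\{2,3\}$, we have $\pi(v_k f_k)=w(f_k)-1$, which is $1$ for every $k$, except that $\pi(v_1 f_1)=2$ when $w\in\{w_{02},w_{20}\}$. Applying the star condition at the internal vertex $v_k$, the unordered pair $\{\pi(v_k e_{k-1}),\pi(v_k e_k)\}$ is then forced to be $\{0,2\}$ when $\pi(v_k f_k)=1$ and $\{0,1\}$ when $\pi(v_k f_k)=2$. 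The second step is propagation: whenever a path edge $e_k$ has $w(e_k)=2$ we get $\pi(v_{k+1} e_k)=2-\pi(v_k e_k)$, and combined with the forced pair at $v_{k+1}$, which determines $\pi(v_{k+1} e_{k+1})$ from $\pi(v_{k+1} e_k)$, this propagates all the way down the thread. Consequently the whole labelling is determined by the single value $\pi(v_1 e_0)$, which must belong to the $2$-element pair at $v_1$, together with $\pi(v_0 e_0)=w(e_0)-\pi(v_1 e_0)$; so there are at most two candidate labellings for each exponent, and it remains to see which of them survive the constraints imposed by the nonstandard weights at the left end.

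The third step is this check, carried out weighting by weighting. For $w_{\mathbf 2}$: the pair at $v_1$ is $\{0,2\}$ and $w(e_0)=2$ imposes no extra restriction, so both choices $\pi(v_1 e_0)\in\{0,2\}$ survive and propagate to $\rho_{20}$ (type $(2,0)$) and $\rho_{02}$ (type $(0,2)$), and these are the only two. For $w_{11}$: the pair at $v_1$ is $\{0,2\}$, but $w(e_0)=1$ forces $\pi(v_0 e_0)=1-\pi(v_1 e_0)\ge0$, hence $\pi(v_1 e_0)=0$, $\pi(v_0 e_0)=1$, $\pi(v_1 e_1)=2$; propagation together with $w(e_m)=3$ then yields the unique $\pi_{11}$, of type $(1,1)$. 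For $w_{20}$: the pair at $v_1$ is $\{0,1\}$, and $w(e_1)=0$ forces $\pi(v_1 e_1)=\pi(v_2 e_1)=0$, hence $\pi(v_1 e_0)=1$, $\pi(v_0 e_0)=2$; propagation then yields the unique $\pi_{20}$, of type $(2,0)$. For $w_{02}$ with $m\ge2$: the pair at $v_1$ is $\{0,1\}$ and the pair at the internal vertex $v_2$ is $\{0,2\}$, and the equation $\pi(v_1 e_1)+\pi(v_2 e_1)=2$ under these membership constraints forces $\pi(v_1 e_1)=0$ and $\pi(v_2 e_1)=2$, hence $\pi(v_1 e_0)=1$, $\pi(v_0 e_0)=0$, and propagation yields the unique $\pi_{02}$, of type $(0,2)$.

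The one case left, $w_{02}$ with $m=1$, is where uniqueness genuinely fails, and I would finish there. Now $v_2=v_{m+1}$ is not an internal vertex, so it is subject to no star condition; the equation $\pi(v_1 e_1)+\pi(v_2 e_1)=2$ with $\pi(v_1 e_1)\in\{0,1\}$ therefore has two admissible solutions: $\pi(v_1 e_1)=0$, which forces $\pi(v_1 e_0)=1$, $\pi(v_0 e_0)=0$ and tail value $2$, giving $\pi_{02}$ of type $(0,2)$; and $\pi(v_1 e_1)=1$, which forces $\pi(v_1 e_0)=0$, $\pi(v_0 e_0)=1$ and tail value $1$, giving $\pi'_{11}$ of type $(1,1)$. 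So $T_1$ has exactly these two $1$-footed prestar labellings with exponent $w_{02}$. I do not anticipate a deep obstacle anywhere; the effort lies entirely in being genuinely exhaustive over the four weightings and over the smallest values of $m$, and the one real subtlety --- which is precisely the source of the exceptional case --- is that the tail vertex of $T_m$ carries no star condition, so the propagation that otherwise pins down every label stops one step short exactly when $m=1$.
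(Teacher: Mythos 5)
Your proposal is correct and follows essentially the same route as the paper: force the foot labels, deduce the two-element pair at each internal vertex, propagate along the path, and resolve the remaining binary choice by a case check at the left end (with the tail vertex's lack of a star condition explaining the exceptional case $(m,i,j)=(1,0,2)$). The paper writes out only the $w_{02}$ case and dismisses the others as "easy and mechanical"; your write-up simply carries out all four weightings explicitly, with no substantive difference in method.
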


\begin{proof}
We prove only the statement regarding $w_{02}$ since the arguments are easy and mechanical.
Let $\pi$ be a 1-footed prestar labelling of $T_m$ whose exponent equals $w_{02}$.
Since $\pi$ is $1$-footed, 
we have $\pi(v_1 f_1) = w_{02}(f_1) - \pi(w_1 f_1) =3-1=2$,
and $\pi(v_k f_k) = 2- 1 =1$, for $2\le k \le m$.
Since $\pi_{v_1}$ is a star labelling we have $\{\pi(v_1 e_0), \pi(v_1 e_1)\}=\{0,1\}$.
In case $\pi(v_1 e_1) = 0$, we have $\pi(v_1 e_0) =1$ and $\pi(v_0 e_0) = 1-1=0$.
We now apply the facts $\pi(v_{k+1} e_k) = 2- \pi( v_k e_k)$ ($k = 1,2,\dots,m$), and
$\{\pi(v_{k} e_{k-1}), \pi(v_k e_k)\}=\{0,2\}$ ($k=2,3,\dots,m$) to find
$\pi(v_k e_k) =0$ and $\pi(v_{k+1} e_k)=2$ for $k=1,2,\dots,m$.
Thus $\pi = \pi_{02}$.
In case $\pi(v_1 e_1)=1$, we find that
$\pi(v_1 e_0) =0$, $\pi(v_0 e_0) = 1-0=1$ and $\pi(v_2 e_1)=2-1=1$. 
If $m \ge 2$, then we have contradicted the fact $\pi_{v_2}$ is a star labelling, since $\pi(v_2 f_2) =1$.
Therefore $m=1$, and $\pi$ is the exceptional prestar labelling $\pi'_{11}$.
\end{proof}

\begin{proposition}
\label{prop:threadSigns}
For each odd integer $m \ge 1$, the prestar  labellings of $T_m$ defined above satisfy $\sgn(\pi_{20}) = \sgn(\pi_{02})$.
For $m=1$, we have that $\sgn(\pi'_{11}) = -\sgn(\pi_{02})$.
For each even integer $m \ge 2$ we have that $\sgn(\rho_{02}) = \sgn(\rho_{20})$.
\end{proposition}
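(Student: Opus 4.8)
The plan is a direct sign computation, arranged so that each internal vertex contributes one of only four possible values. First I would fix the edge ordering $e_0 < e_1 < \dots < e_m < f_1 < \dots < f_m$ of $T_m$. This choice is harmless: reordering $E(T_m)$ reindexes the three edges at each $v_k$ by some permutation $\tau_k$ of three positions, which multiplies $\sgn(\pi_{v_k})$ by $\sgn(\tau_k)$ independently of $\pi$, so all signs of prestar labellings of $T_m$ change by the common factor $\prod_k \sgn(\tau_k)$; the equalities asserted in the proposition are therefore unaffected. Under this ordering the edges at an internal vertex $v_k$, listed by increasing index, are $e_{k-1}, e_k, f_k$, so $\sgn(\pi_{v_k})$ is the sign of the permutation of $\{0,1,2\}$ with one-line notation $\bigl(\pi(v_k e_{k-1}),\, \pi(v_k e_k),\, \pi(v_k f_k)\bigr)$. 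Only four such triples can occur: $(0,1,2)$ and $(2,0,1)$ are even (sign $+1$), whereas $(1,0,2)$ and $(0,2,1)$ are transpositions (sign $-1$). Thus $\sgn(\pi)=\prod_{k=1}^m\sgn(\pi_{v_k})$ is obtained by reading the triple at each $v_k$ off Figure~\ref{fig:threadWeights} (equivalently, off the flag values produced by the mechanical analysis in the proof of Lemma~\ref{lem:possibleStars}) and counting transpositions.

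Carrying this out for the labellings with a displaced unit of weight: in both $\pi_{02}$ and $\pi_{20}$ the vertex $v_1$, being adjacent to the modified head $v_0 e_0$, carries the triple $(1,0,2)$ and contributes $-1$; for $2\le k\le m$ the pattern is periodic, with triple $(2,0,1)$ at every $v_k$ in $\pi_{02}$ and triple $(0,2,1)$ at every $v_k$ in $\pi_{20}$. Hence $\sgn(\pi_{02})=(-1)(+1)^{m-1}=-1$ and $\sgn(\pi_{20})=(-1)(-1)^{m-1}=(-1)^m$, and these coincide exactly when $m$ is odd, giving the first assertion. For $m=1$ the single internal vertex of $\pi'_{11}$ carries the triple $(0,1,2)$, so $\sgn(\pi'_{11})=+1=-\sgn(\pi_{02})$, which is the second assertion.

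For $\rho_{02}$ and $\rho_{20}$, whose common exponent $w_{\mathbf 2}$ displaces no weight, there is no special vertex: every $v_k$ (for $1\le k\le m$) carries the triple $(2,0,1)$ in $\rho_{02}$ and the triple $(0,2,1)$ in $\rho_{20}$. Therefore $\sgn(\rho_{02})=(+1)^m=1$ and $\sgn(\rho_{20})=(-1)^m$, which agree precisely when $m$ is even; this is the third assertion.

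The computation is entirely routine. The only points needing care are the bookkeeping at the end vertex $v_1$, where the moved head weight forces its triple to be ``out of phase'' with the periodic interior pattern, and keeping the ordering convention fixed so that every $\sgn(\pi_{v_k})$ is read off in the same direction. I expect no genuine obstacle beyond faithfully extracting the flag values of each labelling, which is already carried out in the proof of Lemma~\ref{lem:possibleStars}.
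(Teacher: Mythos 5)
Your proof is correct and takes essentially the same approach as the paper: a vertex-by-vertex computation of $\sgn(\pi_{v_k})$ followed by counting the odd contributions, with the paper phrasing each local sign as clockwise/anticlockwise in the planar embedding of Figure~\ref{fig:threadWeights} where you use explicit permutation triples under a fixed edge ordering. Your extracted triples, their parities, and the resulting parities in $m$ all match the paper's computation, and your preliminary remark that a change of edge ordering rescales all signs by a common factor is a correct (and slightly more explicit) justification of the well-definedness the paper leaves implicit.
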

\begin{proof}
Consider the embedding of $T_m$ in the plane shown in Figure \ref{fig:threadWeights}.
The sign of a star  labelling at $v_k$ depends only on whether the three labels $0,1,2$ appear in clockwise or anticlockwise order around~$v_k$.
When $m=1$, the star  labelling at $v_1$ is clockwise under $\pi'_{11}$, and is
anticlockwise under~$\pi_{02}$.  Therefore $\sgn(\pi'_{11}) = -\sgn(\pi_{02})$ for any embedding of $T_1$.
Evidently, every vertex $v_k$ ($1 \le k \le m$) is anticlockwise under $\pi_{20}$,
whereas $v_1$ is the unique anticlockwise vertex under $\pi_{02}$.
Therefore $\sgn(\pi_{20}) = \sgn(\pi_{02})$  provided that $m$ is odd.
Each vertex $v_k$ is clockwise under $\rho_{02}$ and anticlockwise under $\rho_{20}$,
so $\sgn(\rho_{20}) = \sgn(\rho_{02})$ when $m$ is even.
\end{proof}

We define two variations of an edge-weighted thread.
For $m\ge1$, the \emph{closed thread of order~$m$} is the graph $T_m^\circ$ obtained from  $T_m$ by identifying the vertices $v_0$ and $v_{m+1}$. 
We define the edge weightings
  $w_{02}$,   $w_{\mathbf 2}$, 
 and the prestar labellings 
 $\pi_{02}$, $\rho_{20}$ and $\rho_{02}$ exactly as they were defined for $T_m$.
The reader will easily verify the following lemma.

\begin{lemma}
\label{lem:uniq-circ}
Let $m\ge 1$. 
Then  $\rho_{20}$ and $\rho_{02}$ are the only 1-footed prestar  labellings of $T_m^\circ$ having exponent~$w_{\mathbf 2}$.
Furthermore, $\pi_{02}$ is the unique 1-footed prestar  labelling of $T_m^\circ$ having exponent $w_{02}$, for which the head and tail (that is, the two flags incident with $v_0$) receive different labels.
\end{lemma}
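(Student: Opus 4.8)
The plan is to rerun the mechanical walk from the proof of Lemma~\ref{lem:possibleStars}, carefully bookkeeping the one structural change: closing $T_m$ identifies $v_0$ with $v_{m+1}$, so the former head flag $v_0e_0$ and the former tail flag $v_{m+1}e_m$ become the two flags at a single vertex of degree~$2$. A prestar labelling imposes a star-labelling constraint only at the internal vertices $v_1,\dots,v_m$, and these are untouched by the identification, so a prestar labelling of $T_m^\circ$ is the same data as a prestar labelling of $T_m$ in which the numerical relations $w(e_0)=\pi(v_0e_0)+\pi(v_1e_0)$ and $w(e_m)=\pi(v_me_m)+\pi(v_0e_m)$ are now read at the merged vertex, with no ``star'' condition imposed there. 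In particular $\rho_{20}$, $\rho_{02}$ and $\pi_{02}$, being valid on $T_m$ with the stated exponents, remain valid prestar labellings of $T_m^\circ$; this settles the existence half of each assertion, so only uniqueness remains.

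For the exponent $w_{\mathbf 2}$, I would first use the $1$-footed hypothesis to conclude $\pi(v_kf_k)=1$ for $1\le k\le m$, hence $\{\pi(v_ke_{k-1}),\pi(v_ke_k)\}=\{0,2\}$ at every internal vertex. Beginning with one of the two possible values of $\pi(v_1e_1)$ and alternately applying $w_{\mathbf 2}(e_k)=\pi(v_ke_k)+\pi(v_{k+1}e_k)=2$ and the $\{0,2\}$ condition at the next vertex, the labels on $e_1,\dots,e_m$ are forced and the choice propagates all the way around the cycle; this produces exactly $\rho_{20}$ (from $\pi(v_1e_1)=2$, yielding head label~$2$, tail label~$0$) and $\rho_{02}$ (from $\pi(v_1e_1)=0$, yielding head label~$0$, tail label~$2$). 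Both survive because the wrap-around equation $w_{\mathbf 2}(e_m)=2$ merely pins down $\pi(v_0e_m)$ and imposes nothing else at the degree-$2$ vertex.

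For the exponent $w_{02}$, the $1$-footed condition together with $w_{02}(f_1)=3$ and $w_{02}(f_k)=2$ for $k\ge2$ forces $\pi(v_1f_1)=2$ and $\pi(v_kf_k)=1$ for $k\ge2$, hence $\{\pi(v_1e_0),\pi(v_1e_1)\}=\{0,1\}$ while $\{\pi(v_ke_{k-1}),\pi(v_ke_k)\}=\{0,2\}$ for $2\le k\le m$. If $\pi(v_1e_1)=0$ then $\pi(v_1e_0)=1$, $\pi(v_0e_0)=w_{02}(e_0)-1=0$, and the same propagation gives $\pi(v_me_m)=0$, $\pi(v_0e_m)=2$; this is $\pi_{02}$, whose head and tail labels $0$ and $2$ differ. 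If $\pi(v_1e_1)=1$ then $\pi(v_1e_0)=0$, $\pi(v_0e_0)=1$, and $w_{02}(e_1)=2$ puts label~$1$ on the other endpoint of $e_1$: for $m\ge2$ this contradicts the $\{0,2\}$ condition at $v_2$ and no labelling arises, while for $m=1$ (where $e_1=e_m$) it yields a genuine $1$-footed prestar labelling of $T_1^\circ$ in which both flags at $v_0$ carry label~$1$. Imposing that the head and tail receive different labels therefore isolates $\pi_{02}$, as claimed.

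The only delicate point is this $m=1$ case for $w_{02}$: in the open thread $T_m$ the stray solution reappears as the distinct labelling $\pi'_{11}$ of type $(1,1)$, whereas after closing the thread its two flags at $v_0$ get the same label, which is exactly why the lemma must exclude such labellings in its hypothesis. All remaining verifications amount to a single pass around the cycle, which is why the reader can indeed check them easily.
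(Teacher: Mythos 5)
Your proof is correct: the paper leaves this lemma to the reader, and your argument is exactly the intended verification, namely the same mechanical propagation used in the proof of Lemma~\ref{lem:possibleStars}, with the correct observation that no star condition is imposed at the merged degree-$2$ vertex and that the stray $m=1$ solution now assigns equal labels to the two flags at $v_0$ and is thus excluded by the hypothesis. Nothing is missing.
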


An \emph{injured thread of order $m$} is any graph $T_m^-$ that is obtained from $T_m$ by deleting any one of its $m$ ``feet'' $w_k$, $1 \le k \le m$.
The edge weighting $w_{11}^-$ of $T_m^-$ is the restriction of $w_{11}$ to the edge set of $T_m^-$.
A \emph{$1$-footed prestar labelling} of $T_m^-$ is the restriction of a $1$-footed prestar labelling of $T_m$ to the flags in $T_m^-$.
In particular, we define $\pi_{11}^- = \pi_{11} \upharpoonright _{F(T_m^-)}$.
To ease notation, we shall write $w_{11}$ instead of $w_{11}^-$, and write $\pi_{11}$ instead of $\pi_{11}^-$, where no confusion results.
\begin{lemma}
\label{lem:uniq-deleted}
For $m\ge 1$, the prestar  labelling $\pi_{11}$ is the unique 1-footed prestar  labelling of $T_m^-$
whose exponent equals $w_{11}$.
\end{lemma}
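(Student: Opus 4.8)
The plan is to reuse the ``easy and mechanical'' propagation argument from the $w_{02}$ case of Lemma~\ref{lem:possibleStars}. Let $\pi$ be a $1$-footed prestar labelling of $T_m^-$ with exponent $w_{11}$, say $\pi=\hat\pi\upharpoonright_{F(T_m^-)}$ for a $1$-footed prestar labelling $\hat\pi$ of $T_m$. Since a star labelling at the degree-$3$ vertex $v_k$ is determined by any two of its three values, $\hat\pi$ is uniquely recovered from $\pi$, so it suffices to show that $\pi$ agrees with $\pi_{11}$ on every flag of $T_m^-$. For each foot index $j\ne k$ we have $\pi(w_j f_j)=1$ (as $\pi$ is $1$-footed) and $w_{11}(f_j)=2$, whence $\pi(v_j f_j)=1$ and therefore $\{\pi(v_j e_{j-1}),\pi(v_j e_j)\}=\{0,2\}$.

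The propagation then runs as follows. From $w_{11}(e_0)=1$ and $\pi(v_1 e_0)\in\{0,2\}$ (with $\pi$ nonnegative) we get $\pi(v_1 e_0)=0$, hence $\pi(v_0 e_0)=1$ and $\pi(v_1 e_1)=2$; if instead $k=1$, one starts from the tail using $w_{11}(e_m)=3$. For increasing $j$, the edge relation $\pi(v_j e_j)+\pi(v_{j+1}e_j)=w_{11}(e_j)$ combined with $\{\pi(v_{j+1}e_j),\pi(v_{j+1}e_{j+1})\}=\{0,2\}$ (valid whenever $j+1\ne k$) forces $\pi(v_{j+1}e_j)=0$ and $\pi(v_{j+1}e_{j+1})=2$; iterating this carries the pattern $\pi(v_j e_{j-1})=0$, $\pi(v_j e_j)=2$ all the way along, and $w_{11}(e_m)=3$ finishes with $\pi(v_{m+1}e_m)=1$. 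Every value now coincides with $\pi_{11}^-$.

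The one place this chain can break, and hence the crux of the proof, is the injured vertex $v_k$: there the deleted foot removes the $\{0,2\}$-condition. When the propagation reaches $v_k$ we know $\pi(v_k e_{k-1})=0$, so $\pi(v_k e_k)\in\{1,2\}$ by the star condition; to exclude $\pi(v_k e_k)=1$ I would advance one more step, observing that it would force $\pi(v_{k+1}e_k)=1$, contradicting the $\{0,2\}$-condition at the surviving foot $v_{k+1}$. Hence $\pi(v_k e_k)=2$ and $\pi(v_k f_k)=1$, the propagation continues unobstructed, and $\pi=\pi_{11}^-$. The reverse inclusion is immediate, since $\pi_{11}^-$ is by definition the restriction of $\pi_{11}$, whose exponent on $E(T_m^-)$ is $w_{11}^-$. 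I expect the delicate step to be exactly this local analysis at $v_k$: one must choose the direction of propagation according to the position of $k$ so that an uninjured foot always borders $v_k$ on the arrival side, and this is the only point at which the extreme indices $k=1$ and $k=m$ call for any care.
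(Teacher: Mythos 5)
The paper gives no proof of this lemma, so I am judging your argument on its own terms. The propagation scheme you borrow from the $w_{02}$ case of Lemma~\ref{lem:possibleStars} is the right engine, and it does establish uniqueness whenever the injured vertex $v_k$ has $1<k<m$. The gap is exactly where you flag it, and it is not repaired by ``choosing the direction of propagation'': your exclusion of $\pi(v_ke_k)=1$ relies on a surviving foot at the \emph{next} vertex $v_{k+1}$, and when $k=m$ (or, after reversing direction, when $k=1$) that next vertex is the degree-one end $v_{m+1}$ (resp.\ $v_0$), which carries no star condition and no foot, so nothing forbids the label $1$ from propagating out of the thread. In fact the statement as literally written fails in these boundary cases. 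Take $T_1^-$ (so $k=m=1$): the assignment $\pi(v_0e_0)=0$, $\pi(v_1e_0)=1$, $\pi(v_1e_1)=2$, $\pi(v_2e_1)=1$ extends to the $1$-footed prestar labelling of $T_1$ with $\pi(v_1f_1)=0$, $\pi(w_1f_1)=1$, so its restriction to $T_1^-$ is a $1$-footed prestar labelling with exponent $w_{11}$ on $\{e_0,e_1\}$, yet it is not $\pi_{11}$; an analogous type-$(0,1)$ labelling exists for every $m$ with $k=1$, and a type-$(1,2)$ one for $k=m$. (The restriction to $F(T_m^-)$ forgets the exponent on $f_k$, which is why these are not excluded by Lemma~\ref{lem:possibleStars}.)

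What makes the lemma usable is that it is only ever invoked, in Corollary~\ref{cor:restrictedSL}(1), for restrictions of star labellings of $G$, where the deleted foot of the injured thread corresponds to a base flag $v_He_H$ receiving the label $1$ by Proposition~\ref{prop:1-footed}; hence $\{\pi(v_ke_{k-1}),\pi(v_ke_k)\}=\{0,2\}$ holds at the injured vertex as well. You should import this as a hypothesis (equivalently: restrict attention to $1$-footed prestar labellings $\hat\pi$ of $T_m$ with $\hat\pi(v_kf_k)=1$). With it, $v_k$ obeys the same $\{0,2\}$ alternation as every other internal vertex, your propagation from the head runs through with no case distinction on $k$, and the local analysis at $v_k$ that you identify as the crux disappears entirely.
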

%Finally, a \emph{thread} is a graph which is either a thread of order $m$, or an injured thread of order $m$, for some $m$.

\section{A Set of Edge Weightings}
\label{sec:weightSet}
In this section, we define a set of edge weightings $\mathcal W$ of a planar cubic graph, to which we will apply Corollary \ref{cor:interp}.
Let $G$ be a connected loopless cubic graph and let $B(G)$ be the set of cut-edges in $G$.
A \emph{block} of $G$ is any connected component of $G-B(G)$
(this differs  from the standard definition of ``block'').
Each block, $H$, is either a \emph{vertex block}, a \emph{cycle block} or a \emph{proper block},
depending on whether $H$ is a single vertex, a cycle or a subdivision of  $2$-connected cubic graph.
The \emph{block tree} of $G$ is the tree obtained by contracting each block $H$ to a single vertex, which we also denote by $H$ where no confusion results.
Since $G$ is finite, at least one block of $G$ is a proper block.
We designate one proper block to be the $\emph{root block}$ $H_0$ of $G$.
Every other block of $G$ is called a \emph{nonroot block} of $G$.
We define $B(H_0)$ to be the set of edges in $B(G)$ which have exactly one end in $H_0$.
Every nonroot block $H$ is incident to a unique cut-edge, denoted by $e_H$,
 which lies on the path from $H$ to ${H_0}$ in the block tree of $G$.
For nonroot blocks $H$, we define $B(H)$ to be the set of edges in $B(G)-\{e_H\}$ which have an endpoint in $H$.
For any block $H$ of $G$, let $H^+$ be the subgraph of $G$ obtained from $H$ by adding all the edges in $B(H)$ and their endpoints.
Each subgraph $H^+$ is called an \emph{extended block} of $G$.
The extended blocks of $G$ depend on the choice of $H_0$.

The edge sets of the extended blocks of $G$ form a partition of~$E(G)$.
 We further refine the extended blocks into pieces that are each isomorphic to one of the  threads, $T_m$, $T^\circ_m$ or $T^-_m$, defined in Section \ref{sec:threads}.
Each extended vertex block is a path of length $2$, which we regard to be copy of the injured thread $T^-_1$.
We define the family
\[
\mathcal T^1 = \{ H^+ \;\vert \text{ $H$ is a vertex block of $G$}\}.
\]
Each extended cycle block is isomorphic to a closed thread $T_m^\circ$, for some $m\ge 1$.
We group these into two families.
\begin{align*}
\mathcal T_\text{odd}^\circ &= \{ H^+ \;\vert \text{ $H$ is a cycle block of $G$, and $H^+ \cong T^\circ_m$, where $m$ is odd}\}\\
\mathcal T_\text{even}^\circ &= \{ H^+ \;\vert \text{ $H$ is a cycle block of $G$, and $H^+ \cong T^\circ_m$, where $m$ is even}\}.
\end{align*}
The reader should notice that if $H^+ \cong T^\circ_m$, then
the length of the cycle $H$ has opposite parity to $m$.%

Each extended proper block $H^+$ of $G$ decomposes into copies of 
threads $T_m$ and injured threads $T^-_m$ as follows.
By suppressing every vertex of degree $2$ in $H$ we obtain a  $2$-connected cubic graph homeomorphic to $H$, which is denoted $\bar H$ and called the \emph{derived graph} of $H$.
Each edge $\bar e \in E(\bar H)$  corresponds to a maximal induced path $P$ of positive length in $H$.
By adding to $P$ those edges in $B(H)$ (and their endpoints) which are incident to $P$,
we obtain a subgraph $T_{\bar e} \subseteq H^+$ which is isomorphic to either
a thread $T_m$, $m\ge0$, or injured thread $T^-_m$, $m\ge 1$.   
The  edge sets of the subraphs in  $\{T_{\bar e} \;\mid\; \bar e \in E(\bar H)\}$ form a partition of $E(H^+)$.  
%At most one of the subgraphs $T_{\bar e}$ is an injured thread.
%
Summarizing, we have decomposed $G$ into a family
$\mathcal T =  \mathcal T^1 \cup \mathcal T^\circ_\text{odd} \cup  \mathcal T^\circ_\text{even}  \cup \{T_{\bar e}\;\vert\; \bar e \in E(\bar G)\}$ 
of copies of threads, injured threads and closed threads where
\begin{align*}
\bar G   &= \cup \{ \bar H \;\vert \text{ $H$ is a proper block of $G$}\}.
\end{align*}
The members of $\mathcal T$ are called \emph{general threads of $G$},
and $\bar G$ is the \emph{derived graph} of $G$.  
An edge $\bar e \in E(\bar G)$ is a \emph{base edge} of $\bar G$ if $T_{\bar e}$ is isomorphic to an injured thread.  Thus each connected component of $\bar G$ other than $\bar{H_0}$ contains exactly one base edge.
Each $T\in \mathcal T$ has zero or more well defined \emph{feet},
but there are two ways to select which end is the  \emph{head}  of $T$.

\smallskip

Let $G$ be a connected planar cubic graph where a base block $H_0$ has been selected.
Let the $\mathcal T$ and $\bar G$ be the general thread decomposition and reduced graph as defined above.
To describe a weighting of $G$ it suffices to specify, for each  $T \in \mathcal T$,
which end of $T$ is the head, and which of the weightings described in Section \ref{sec:threads} to assign to $T$.  
This specification will make reference to a particular perfect matching in the reduced cubic graph $\bar G$.
If $M\subseteq E(\bar G)$ is a perfect matching in $\bar G$, then the edge set
$D=E(\bar G)-M$ is a \emph{$2$-factor} of $\bar G$.
We say that $D$ is \emph{bipartite} if every cycle of $\bar G-M$ has even length.
Let $M \subseteq E(\bar G)$ be a perfect matching in $\bar G$ satisfying the following properties.
\begin{enumerate}
\label{eq:MProps}
\item every base edge of $\bar G$ is an edge in $M$, \label{M:1}
\item the $2$-factor $D= E(\bar G) - M$ is bipartite,
\item subject to conditions (1) and (2), $M$ contains the maximum possible number of  edges~$\bar e$ for which the general thread $T_{\bar e}$ is nontrivial (that is, $T_{\bar e} \not\cong T_0$).
\end{enumerate}
The matching $M$ exists because every component of $\bar G$ has a proper $3$-edge colouring
(by the Four Colour Theorem), and has at most one base edge.
We partition the set of threads
$\{T_{\bar e} \;\vert\; \bar e \in E(\bar G) \}$ into five classes 
$(\mathcal T_0^M, \mathcal T_{\ge 1}^M,\mathcal T^D_{\textrm{odd}}, \mathcal T^D_{\textrm{even}},\mathcal T^D_0)$ where
\begin{align*}
\mathcal T_0^M &= \{ T_{\bar e} \in \mathcal T \;\vert\; e_T \in M, T_{\bar e} \cong T_0 \text{ is trivial}\},\\
\mathcal T_{\ge 1}^M &= \{ T_{\bar e} \in \mathcal T \;\vert\; e_T \in M, T_{\bar e}  \text{ is nontrivial}\},\\
\mathcal T^D_{\textrm{odd}} &= \{ T_{\bar e} \in \mathcal T \;\vert\; e_T \in D,\text{ and $T_{\bar e}$ has odd order}\}\\
\mathcal T^D_{\textrm{even}} &= \{ T_{\bar e} \in \mathcal T \;\vert\; e_T \in D,\text{ and $T_{\bar e}$ has even order at least $2$}\}\\
\mathcal T^D_0   &= \{ T_{\bar e} \in \mathcal T \;\vert\; e_T \in D,\text{ and $T_{\bar e} \cong T_0 $ is trivial}\}
\end{align*}
We have defined the following partition of the general threads of $G$ into eight classes.
%%%%%%%
\begin{equation}\label{eq:allThreads}
    \mathcal T =  \mathcal T^1 \cup \mathcal T^\circ_{\textrm{odd}} \cup T^\circ_{\textrm{even}}
           \cup \mathcal T_0^M \cup \mathcal T_{\ge 1}^M \cup \mathcal T^D_{\textrm{odd}} \cup  \mathcal T^D_{\textrm{even}} \cup   \mathcal T^D_0 .
\end{equation}
%%%%
By the choice of $M$, every injured thread in $\mathcal T$ belongs to $\mathcal T_{\ge 1}^M \cup \mathcal T^1$.

Let $\vec D$ be any fixed cyclic orientation of the $2$-factor $D = \bar G -M$.
For each general thread in~\eqref{eq:allThreads} we arbitrarily designate one of two possible flags to be its head,
subject to the following condition.
\begin{equation}\label{eq:orientation}
	\text{For every $T = T_{\bar e} \in \mathcal T^D_{\textrm{odd}}$, the head of $T$ equals the head of 
	$\bar e \in \vec D$.}
\end{equation}
We now refer to the thread weightings defined in Section~\ref{sec:threads}.
For every subset $\mathcal S \subseteq \mathcal T^D_{\textrm{odd}}$,
we define
$w_{\mathcal S} : E(G) \to \{0,1,2,3\}$ 
to be the edge weighting which restricts to every general thread $T \in \mathcal T$, as follows.
\begin{equation}\label{eq:wDef}
w_{\mathcal S} \upharpoonright_{E(T)} 
     =\begin{cases}
               w_{11}                   & \text{if  $T \in \mathcal T_{\ge 1}^M \cup \mathcal T^1$}\\
               w_{20}                                 & \text{if $T\in \mathcal S$},\\
               w_{02}                                & \text{if $T \in \mathcal (\mathcal T^D_{\textrm{odd}} - \mathcal S) \cup T^\circ_{\text {even}}$}\\
               w_{\mathbf 2}                   & \text{if $T \in \mathcal T_0^M \cup \mathcal T_0^D \cup \mathcal T^D_{\text{even}} \cup \mathcal T^\circ_{\text{odd}}$.}       \end{cases} 
\end{equation}
Finally, we define the following set of edge weightings of $G$,
\[
\mathcal W = \{w_\mathcal S : \mathcal S \subseteq \mathcal T^D_{\textrm{odd}}\}.
\]

%%%%%%%%%%%%%%%%%%%%%%%

\section{Star labellings of $G$}    \label{sec:starLabellings}
Let $G$  be a planar cubic graph.
We designate a proper block of $G$ to be the root block of~$G$.
We define the extended blocks, and $\bar G$, $M$, $\vec D$ and
$\mathcal W = \{w_\mathcal S : \mathcal S \subseteq \mathcal T^D_{\textrm{odd}}\}$ as in Section~\ref{sec:weightSet}.
Let $\Pi_\mathcal S$ be the set of star labellings of $G$ whose exponent is $w_\mathcal S$,
and let 
\[
\Pi = \cup \{\Pi_\mathcal S : \mathcal S \subseteq \mathcal T^D_{\textrm{odd}}\} .
\]
A \emph{base flag} of $G$ is any flag that is a foot of some general thread in $G$.
Thus every  base flag takes the form $v_He_H$
where $v_H$ is the unique vertex of degree 2 in some nonroot extended block $H^+$, and $e_H$ is the unique cut-edge of $G$ which is incident to $v_H$, and is not an edge of $H^+$.
\begin{proposition}
\label{prop:1-footed}
For every star labelling $\pi \in \Pi$ we have $\pi(v_H e_H)=1$, for every base flag $v_H e_H$ of $G$. 
\end{proposition}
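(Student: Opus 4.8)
The plan is to reduce the assertion to a single counting identity, valid for each nonroot block $H$, that expresses $\pi(v_He_H)$ in terms of the values $\pi(v_{H'}e_{H'})$ at the base flags of the ``child'' blocks $H'$ of $H$ in the block tree of $G$, and then to induct on that tree from its leaves toward the root block $H_0$. As observed just before the statement, every base flag of $G$ has the form $v_He_H$ for some nonroot block $H$, so it suffices to prove $\pi(v_He_H)=1$ for every such $H$. Throughout, fix $\pi\in\Pi$; it has exponent $w_{\mathcal S}$ for some $\mathcal S\subseteq\mathcal T^D_{\textrm{odd}}$, so that $\pi(ue)+\pi(ve)=w_{\mathcal S}(e)$ for every edge $e=uv$ of $G$, and $\pi$ assigns the three labels $0,1,2$ to the three flags at each vertex.

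The first step is an easy weight count. Each thread weighting appearing in \eqref{eq:wDef}, namely $w_{\mathbf 2}$, $w_{11}$, $w_{02}$, $w_{20}$ (and likewise the restricted weighting $w_{11}^-$ on an injured thread and the weightings $w_{\mathbf 2},w_{02}$ on a closed thread), is obtained from the constant weighting $e\mapsto 2$ by transferring units of weight from one edge to another, and so has the same total. Hence $\sum_{e\in E(T)}w_{\mathcal S}(e)=2\,|E(T)|$ for every general thread $T$ of $G$; since the edge sets of the general threads lying in an extended block $H^+$ partition $E(H^+)$, it follows that $\sum_{e\in E(H^+)}w_{\mathcal S}(e)=2\,|E(H^+)|$ for every extended block $H^+$ of $G$.

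The second, and main, step is the identity. Fix a nonroot block $H$ and put
\[
\Sigma=\sum_{v\in V(H^+)}\ \sum_{e\in E(H^+),\ e\ni v}\bigl(\pi(ve)-1\bigr).
\]
Collecting the terms edge by edge and using the first step, $\Sigma=\sum_{e\in E(H^+)}\bigl(w_{\mathcal S}(e)-2\bigr)=0$. On the other hand, evaluate the inner sum over each vertex of $H^+$ in turn. If $v$ has degree $3$ in $H^+$, then all three of its flags lie in $H^+$ and the inner sum is $(0-1)+(1-1)+(2-1)=0$. The unique degree-$2$ vertex of $H^+$ is $v_H$, and its two $H^+$-flags carry the two labels of $\{0,1,2\}\setminus\{\pi(v_He_H)\}$, so the inner sum there equals $1-\pi(v_He_H)$. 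Finally, the degree-$1$ vertices of $H^+$ are exactly the endpoints lying outside $H$ of the cut-edges in $B(H)$; each such cut-edge is $e_{H'}$ for a unique child block $H'$ of $H$, and the endpoint in question is $v_{H'}$, so the inner sum there equals $\pi(v_{H'}e_{H'})-1$. Equating the two evaluations of $\Sigma$ gives
\[
\pi(v_He_H)=1+\sum_{H'}\bigl(\pi(v_{H'}e_{H'})-1\bigr),
\]
the sum running over the child blocks $H'$ of $H$ in the block tree.

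The last step is the induction on the block tree, from the leaves toward $H_0$. A leaf block has no child blocks, so the identity gives $\pi(v_He_H)=1$ outright; for an arbitrary nonroot block $H$, the induction hypothesis supplies $\pi(v_{H'}e_{H'})=1$ for every child block $H'$, and the identity again yields $\pi(v_He_H)=1$. The step requiring the most care will be the vertex-by-vertex evaluation of $\Sigma$: one must check that it goes through uniformly for vertex blocks, cycle blocks and proper blocks — in particular for a vertex block, where $H^+\cong T_1^-$ contains no edge of $H$ and $v_H$ is its central vertex — and one must correctly match the degree-$1$ vertices of $H^+$ with the distinguished degree-$2$ vertices of the child extended blocks. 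Once that matching is in place, the remaining arithmetic is routine.
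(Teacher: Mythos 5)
Your proof is correct and rests on the same double count as the paper's: the weightings in \eqref{eq:wDef} average $2$ on every extended block, while the three flags at a degree-$3$ vertex carry labels summing to $0+1+2=3$. The only difference is organizational --- the paper performs the count once over the union of all extended blocks on the far side of $e_H$ from the root (where $v_H$ is the unique vertex of degree less than $3$, so no induction is needed), whereas you localize the identity to a single extended block and telescope it by inducting on the block tree.
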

\begin{proof}
Let $w_\mathcal S \in \mathcal W$ be the exponent of $\pi$.
Let $v_H e_H$ be a base flag in $G$,
and let $\mathcal K$ be the set of blocks $K$ of $G$  for which $e_H$ lies on the unique path from $K$ to the root block of $G$ in the block tree of $G$.
Let $L = \bigcup_{K \in \mathcal K} K^+$.
Every vertex in the subgraph $L$ has degree $3$ except for $v_H$, which has degree $2$.
For every extended block $K^+$ of $G$, the average value of $w_\mathcal S(e)$ among the edges $e \in E(K^+)$ equals~$2$.
This is because each of the weightings $w_{11}, w_{20},w_{02},w_{\mathbf 2}$  has average value $2$ in the definition of $w_\mathcal S$.
Since  $\{E(K^+): K \in \mathcal K\}$ is a partition of $E(L)$,
the average value of $w_{\mathcal S}(e)$ among the edges of $L$ equals $2$.
Therefore the average
value of $\pi(ve)$ among the flags in $F(L)$ equals $1$.
On the other hand, 
each vertex of $L$ is incident to three flags in $F(L) \cup \{ v_H e_H\}$ and contributes $0+1+2=3$ to the value of
$\pi(v_H e_H) + \sum \{ \pi(ve) : ve \in F(L)\}$.
Thus the average
value of $\pi(ve)$ among the flags in $F(L) \cup \{ v_H e_H\}$ equals $1$, and $\pi(v_H e_H)=1$.
\end{proof}

\begin{corollary}
\label{cor:restrictedSL}
Let $\mathcal S \subseteq \mathcal T^D_{\textrm{odd}}$.
Then every star labelling $\pi \in \Pi_\mathcal S$, satisfies the following.
	\begin{enumerate}
	\item For every $T \in \mathcal T_{\ge 1}^M \cup \mathcal T^1$ we have $\pi \upharpoonright_{F(T)} = \pi_{11}$.   \label{choice:11}

	\item For every $T\in \mathcal S$, we have $\pi \upharpoonright_{F(T)} = \pi_{20}$.                          \label{choice:20}

	\item For every $T\in \mathcal T^D_{\textrm{odd}} - \mathcal S$,           
	         we have $\pi \upharpoonright_{F(T)}\in \{\pi_{02}, \pi'_{11}\}$.                                        \label{choice:02}
	         
	\item For every $T\in  \mathcal T^\circ_{\text {even}}$,           
	         we have $\pi \upharpoonright_{F(T)}=\pi_{02}$.                                                          \label{choice:02circ} 

	\item For every $T\in \mathcal T^D_{\textrm{even}} \cup \mathcal T^\circ_{\text{odd}}$, 
	           we have $\pi \upharpoonright_{F(T)}  \in \{ \rho_{02} ,  \rho_{20}\}$.                       \label{choice:2}
	           
	\item For every $T\in \mathcal T^M_0 \cup \mathcal T^D_0$, 
	           we have $\pi \upharpoonright_{F(T)}  \in \{ \rho_{02} ,  \rho_{20}, \rho_{11}\}$.   \label{choice:0}
	\end{enumerate}
\end{corollary}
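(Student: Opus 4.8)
The plan is to reduce the six assertions to the uniqueness statements in Lemmas~\ref{lem:possibleStars}, \ref{lem:uniq-circ} and~\ref{lem:uniq-deleted}. Fix $\pi\in\Pi_{\mathcal S}$ and a general thread $T\in\mathcal T$. The key claim is that the restriction $\sigma:=\pi\upharpoonright_{F(T)}$ is a \emph{$1$-footed} prestar labelling of $T$ whose exponent equals the weighting $w_{\mathcal S}\upharpoonright_{E(T)}$ prescribed by~\eqref{eq:wDef} (read with respect to the head of $T$ fixed in Section~\ref{sec:weightSet}). Granting this, each of (1)--(6) follows by reading $w_{\mathcal S}\upharpoonright_{E(T)}$ off~\eqref{eq:wDef} for the relevant class in~\eqref{eq:allThreads} and quoting the appropriate lemma.

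To establish the key claim I would verify three things. First, $\sigma$ is a prestar labelling: every internal (degree-$3$) vertex $v_k$ of $T$ has all three of its $G$-flags in $F(T)$, and $\sigma$ restricted to them is $\pi_{v_k}$, which is a star labelling because $\pi$ is. For $T_m$ and $T^\circ_m$ this is the entire definition; for an injured thread $T^-_m=T_m-w_j$ one moreover extends $\sigma$ to $F(T_m)$ by setting the deleted foot flag $w_jf_j$ to $1$ and the flag $v_jf_j$ to $\pi(v_je_H)$, where $e_H$ is the cut-edge of $G$ at $v_j$ not lying in $T^-_m$; since $v_je_H$ is a base flag of $G$, Proposition~\ref{prop:1-footed} gives $\pi(v_je_H)=1$, so this extension is a $1$-footed prestar labelling of $T_m$ and $\sigma$ is one of $T^-_m$. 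Second, $\sigma$ is $1$-footed: each foot of $T$ is a base flag of $G$, hence receives value~$1$ by Proposition~\ref{prop:1-footed}. Third, the exponent of $\sigma$ is $w_{\mathcal S}\upharpoonright_{E(T)}$: every edge $e=uv\in E(T)$ has both flags $ue,ve$ in $F(T)$, so $\sigma$'s exponent at $e$ equals $\pi(ue)+\pi(ve)=w_{\mathcal S}(e)$ because $\pi\in\Pi_{\mathcal S}$.

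Then I would run through the eight classes. For $T\in\mathcal T_{\ge 1}^M\cup\mathcal T^1$ the exponent is $w_{11}$ and $T$ is a nontrivial thread $T_m$ or an injured thread $T^-_m$, $m\ge1$, so $\sigma=\pi_{11}$ by Lemma~\ref{lem:possibleStars} (with $(i,j)=(1,1)$, never the exceptional $(0,2)$) or by Lemma~\ref{lem:uniq-deleted}, giving~(1). For $T\in\mathcal S$ or $T\in\mathcal T^D_{\textrm{odd}}-\mathcal S$ we have $e_T\in D$, so $T$ is a genuine thread of odd order $m\ge1$ (not injured, as base edges lie in $M$, and not trivial) with exponent $w_{20}$, resp.\ $w_{02}$, and Lemma~\ref{lem:possibleStars} gives $\sigma=\pi_{20}$, resp.\ $\sigma\in\{\pi_{02},\pi'_{11}\}$ with the second alternative only when $m=1$; this gives~(2) and~(3). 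For $T\in\mathcal T^\circ_{\textrm{even}}$, $T\cong T^\circ_m$ has exponent $w_{02}$ and the head and tail of $T$ (the two flags at the identified vertex $v_0$) receive distinct labels under $\pi$ because $\pi_{v_0}$ is a bijection onto $\{0,1,2\}$, so Lemma~\ref{lem:uniq-circ} forces $\sigma=\pi_{02}$, giving~(4). For $T\in\mathcal T^D_{\textrm{even}}\cup\mathcal T^\circ_{\textrm{odd}}$ the exponent is $w_{\mathbf 2}$ and $T$ is a thread $T_m$, $m\ge1$, or a closed thread $T^\circ_m$, so $\sigma\in\{\rho_{02},\rho_{20}\}$ by Lemma~\ref{lem:possibleStars} or~\ref{lem:uniq-circ}, giving~(5). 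Finally, for $T\in\mathcal T_0^M\cup\mathcal T^D_0$ we have $T\cong T_0$ with exponent $w_{\mathbf 2}$, and Lemma~\ref{lem:possibleStars} lists exactly $\rho_{20},\rho_{02},\rho_{11}$, giving~(6).

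I expect the only genuine subtleties to be (i) checking that restricting a star labelling of $G$ to an injured thread indeed yields a $1$-footed prestar labelling, which is precisely where Proposition~\ref{prop:1-footed} enters via the flag $v_je_H$, and (ii) supplying the extra hypothesis needed to invoke Lemma~\ref{lem:uniq-circ}, namely that the head and tail of a closed thread receive different labels, which is immediate from injectivity of $\pi_{v_0}$. Everything else is bookkeeping over the partition~\eqref{eq:allThreads}, the actual case analysis having already been carried out inside Lemmas~\ref{lem:possibleStars}--\ref{lem:uniq-deleted}.
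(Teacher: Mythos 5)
Your proposal is correct and follows the paper's own argument: Proposition~\ref{prop:1-footed} shows each restriction $\pi\upharpoonright_{F(T)}$ is a $1$-footed prestar labelling with the exponent prescribed by \eqref{eq:wDef}, the uniqueness Lemmas~\ref{lem:possibleStars}--\ref{lem:uniq-deleted} then pin down the possibilities class by class, and the head/tail distinctness at the identified vertex supplies the extra hypothesis of Lemma~\ref{lem:uniq-circ} for case \eqref{choice:02circ}. The only difference is that you spell out details the paper leaves implicit, notably the extension argument showing that the restriction to an injured thread is a $1$-footed prestar labelling in the sense of Section~\ref{sec:threads}.
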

\begin{proof}
For any general thread $T \in \mathcal T$,
the restriction $\pi \upharpoonright_{F(T)}$ is 1-footed, by Proposition~\ref{prop:1-footed}.
Now all the statements except \eqref{choice:02circ} follow immediately
from the definition of $w_\mathcal S$ and
the three lemmas in Section~\ref{sec:threads}.
For the statement \eqref{choice:02circ}, we observe that the head and tail of a circular thread in
must receive distinct labels in any star labelling of $G$, and the claim follows from Lemma \ref{lem:uniq-circ}.
\end{proof}

For every star labelling $\pi$ of $G$, we define a a corresponding star labelling
$\bar \pi$ of $\bar G$ called the \emph{derived} star labelling.
Informally, $\bar\pi$ is the restriction of $\pi$  to the heads and the tails of the general threads of $G$.
More precisely, for each $\bar e \in E(\bar G)$, let $T = T_{\bar e}$ be the corresponding general thread of $G$.
Let $u$ and $v$ be the endpoints of $\bar e$ that correspond to the head and tail of $T$, respectively.
The restriction $\pi \upharpoonright_{F(T)}$ corresponds a 1-footed prestar labelling of a thread or injured thread having type $(i,j)$, for some $(i,j) \in \{(1,1),(2,0),(0,2)\}$.  We define $\bar\pi(ue)=i$ and $\bar\pi(ve)=j$.

Let $\pi \in \Pi$. By the definition of $\mathcal W$, the exponent of $\bar\pi$
is the constant function $\bar w = \mathbf 2$, $\bar w : E(\bar G) \to \{2\}$.
Let $M_\pi$ be the set of edges  $\bar e=uv\in E(\bar G)$ such that $\bar\pi(ue)=\bar\pi(ve)=1$.
Let $D_\pi = E(\bar G)-M_\pi$ and let $\vec D_\pi$ be the orientation of $D_\pi$
 where, for $e=uv \in \vec D_\pi$ we have $\bar\pi(ue)=0$ and $\bar\pi(ve)=2$.
Then $M_\pi$ is a perfect matching of $\bar G$, and $\vec D_\pi$ is an oriented $2$-factor of $\bar G$.
%Conversely every pair $(M,\vec D)$ where $M$ is a perfect matching of $\bar G$ and $\vec D$ is an oriented $2$-factor of $\bar G$ arises in this way from a unique star labelling of $\bar G$ which we denote by $\bar\pi(M,\vec D)$.
The correspondence between the star labellings of $\bar G$ with exponent $\mathbf 2$
and the pairs $(M',\vec D')$ where $\vec D'$ is an oriented $2$-factor of $\bar G$ is bijective.

The following will help us later deal with the exceptional star labelling $\pi'_{11}$ that arises in part  \eqref{choice:02} of Corollary \ref{cor:restrictedSL}.

\begin{lemma}   \label{lem:pi11OddCycle}
Let $\mathcal S \subseteq \mathcal T^D_{\textrm{odd}}$ and let $\pi \in \Pi_\mathcal S$.
Let $D_\pi$ be the $2$-factor of $\bar G$ as defined above.
If $G$ has a general thread $T \in \mathcal T$
%\in \mathcal T^D_\text{odd} -\mathcal S$ 
for which
$\pi \upharpoonright_{F(T)} =\pi'_{11}$, then some connected component of $D_\pi$ is an odd cycle in $\bar G$.
\end{lemma}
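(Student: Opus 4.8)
The plan is to translate the hypothesis into a statement about the derived labelling $\bar\pi$ and then to track it around one cycle of the bipartite $2$-factor $D$.

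First I would pin down which thread is involved. By Lemma~\ref{lem:possibleStars} the restriction $\pi\upharpoonright_{F(T)}$ can equal $\pi'_{11}$ only when $T\cong T_1$, and by Corollary~\ref{cor:restrictedSL} (its statement about $\mathcal T^D_{\textrm{odd}}-\mathcal S$) every such thread has the form $T=T_{\bar e}$ with $T_{\bar e}\in\mathcal T^D_{\textrm{odd}}-\mathcal S$; in particular $\bar e\in D$. Since $\pi'_{11}$ has type $(1,1)$, the definition of the derived labelling gives $\bar\pi(u\bar e)=\bar\pi(v\bar e)=1$, where $u,v$ are the ends of $\bar e$, so $\bar e\in M_\pi$ and hence $\bar e\in D\setminus D_\pi$. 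Let $C$ be the component of the $2$-factor $D$ that contains $\bar e$; it is a cycle of even length because $D$ is bipartite, and I would orient it consistently with $\vec D$.

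Next I would read off the behaviour of $D_\pi$ and $\vec D_\pi$ along $C$. At every vertex $x$ of $\bar G$ the three flags carry the labels $0,1,2$ under $\bar\pi$; the label-$1$ flag lies on the $M_\pi$-edge at $x$, the label-$0$ flag on the out-edge of $\vec D_\pi$ at $x$, and the label-$2$ flag on the in-edge. Walking around $C$ and using that the two $C$-edges meeting at a vertex receive distinct labels, one obtains: (i) no two consecutive edges of $C$ lie in $M_\pi$; (ii) within each maximal arc of consecutive $C$-edges lying in $D_\pi$, the orientation $\vec D_\pi$ either agrees with $\vec D$ throughout or is reversed throughout; and (iii) at the two ends of each such arc the $\vec D_\pi$-cycle through that vertex leaves $C$ along an edge of the matching $M$, since the label count forces that matching edge into $D_\pi$. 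Because $\bar e\in M_\pi$, the cyclic word along $C$ contains at least one $M_\pi$-edge, so $C$ splits into $k\ge 1$ arcs of $D_\pi$-edges, each of positive length, separated by single $M_\pi$-edges.

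Finally I would assemble an odd cycle of $D_\pi$. The $\vec D_\pi$-cycles that meet $V(C)$ are obtained by stringing together the $k$ directed arcs of (ii) with paths of $\vec D_\pi$ that run off $C$ through the $M$-edges of (iii); I would compute the total length of these cycles modulo $2$, using that $C$ has even length, that the arcs account for (length of $C$)$-k$ of the edges, and that consecutive arcs on $C$ are separated by exactly one $M_\pi$-edge of $C$, so that the remaining contribution comes from the off-$C$ connecting paths and from the way the arc-endpoints are paired up. Carrying this count through forces some $\vec D_\pi$-cycle to have odd length. The main obstacle is precisely this last parity bookkeeping — controlling the parity of the excursions of $\vec D_\pi$ away from $C$ and of the pairing of the arc-endpoints — together with the degenerate configuration in which an edge of $M$ has both of its endpoints on $C$, which I would dispose of by a separate direct check.
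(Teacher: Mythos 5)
There is a genuine gap: your argument never invokes the extremal condition in the choice of $M$ (condition (3) of Section 4, that subject to (1) and (2) the matching $M$ covers the maximum number of edges $\bar e$ with $T_{\bar e}$ nontrivial), and the lemma is false without that condition, so the ``parity bookkeeping'' you defer to at the end cannot be carried through. Concretely, suppose the root component of $\bar G$ is $K_4$ on vertices $1,2,3,4$ with $M=\{12,34\}$, so that $C=D$ is the even cycle $1\,3\,2\,4\,1$; suppose $T_{13}$ has order $1$ and all other threads of this component are trivial, and let $\pi$ give $T_{13}$ the labelling $\pi'_{11}$. Then $M_\pi=\{13,24\}$ and $D_\pi$ is the $4$-cycle $1\,2\,3\,4\,1$, which is bipartite: no odd cycle appears, even though every local hypothesis you use (the thread is a $T_1$ in $\mathcal T^D_{\mathrm{odd}}-\mathcal S$, its edge lies in $M_\pi\setminus M$, and $C$ is even) is satisfied. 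What rules this configuration out is only that such an $M$ would never have been \emph{chosen}: $\{13,24\}$ also satisfies (1) and (2) and covers one more nontrivial thread, so $M=\{12,34\}$ violates (3). A purely local walk around $C$ cannot detect this.

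The paper's proof is exactly this global extremal comparison, and it is very short. Every nontrivial thread in $\mathcal T^M_{\ge 1}$ receives $\pi_{11}$, which has type $(1,1)$, so its edge lies in $M_\pi$; the hypothesis supplies one additional nontrivial thread $T$ of type $(1,1)$ with $\bar e\in D$. Hence $M_\pi$ is a perfect matching of $\bar G$ containing every base edge and strictly more nontrivial-thread edges than $M$. If $D_\pi$ were bipartite, $M_\pi$ would satisfy (1) and (2) and beat $M$ on (3), a contradiction; therefore some component of $D_\pi$ is an odd cycle. Your opening observations (that $T\cong T_1$, that $T\in\mathcal T^D_{\mathrm{odd}}-\mathcal S$, and that $\bar e\in M_\pi$) are correct and are precisely the right starting point, but the analysis along $C$ should be replaced by this comparison of $M_\pi$ with $M$.
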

\begin{proof}
Let $T$ be as in the statement. By Corollary \ref{cor:restrictedSL} we necessarily have $T\in \mathcal T^D_\text{odd} -\mathcal S$
Let $\mathcal T^\pi_{\ge 1}$ be the set of general  threads in $\mathcal T^M \cup \mathcal T^D$ which are nontrivial and receive a type $(1,1)$ prestar labelling under $\pi$.
By part \eqref{choice:11} of Corollary \ref{cor:restrictedSL}, we have $\mathcal T^M_{\ge 1} \subseteq \mathcal T^\pi_{\ge 1}$.
By the hypothesis, we also have  $T \in \mathcal T^\pi_{\ge 1} \setminus \mathcal T^M_{\ge 1}$ so $|\mathcal T^\pi_{\ge 1}| > | \mathcal T^M_{\ge 1}|$.
If the $2$-factor $D_\pi$ of $\bar G$ were bipartite, then the perfect matching $M_\pi$ would contradict our choice of $M$.
Therefore some component of $D_\pi$ is an odd cycle of $\bar G$.
\end{proof}

Let $(M, \vec D)$ be the perfect matching in $\bar G$ and the orientation of the complementary 2-factor used in the definition of the set of weightings $\mathcal W$.
Let $\pi^0$ be the unique star labelling of $G$ satisfying
	\begin{itemize}
	\item  $\pi^0 \in \Pi_\emptyset$,
	\item $M_{\pi^0}= M$, %equals the matching $M$ used in the definition of $\mathcal W$, and
	\item every closed thread $T \in \mathcal T^\circ_{\text{odd}}$ receives the labelling $\rho_{02}$,
		as in \eqref{choice:2} of Corollary ~\ref{cor:restrictedSL}.
	\end{itemize}
The star labelling $\pi^0$ is called the \emph{reference star labelling} of $G$.
Accordingly, the reduced star labelling $\overline{\pi^0}$ is called the \emph{reference star labelling} of $\bar G$.
These star labellings will be used for sign computations. 
It convenient to assume that $\sgn(\pi^0)=\sgn\left(\overline{\pi^0}\right)=1$.

Let $\Pi_0$ be the set of star labellings $\pi\in\Pi$ for which $D_\pi$ is a bipartate $2$-factor of $\bar G$.
In particular, $\pi^0 \in \Pi_0$ because $D_{\pi^0}=D$ is bipartite.
The following was proved by Ellingham and Goddyn \cite{ElGo}.
\begin{lemma}
\label{EG}
Let $\bar\pi$ be a star labelling of a planar cubic graph $\bar G$ with exponent $w\equiv 2$.
If $D_\pi$ is bipartite, then $\sgn(\bar\pi)=1$.
\end{lemma}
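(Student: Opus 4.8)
\textbf{Proof proposal for Lemma \ref{EG}.}

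The plan is to reduce the statement to a combinatorial identity about the parity of the number of ``clockwise'' vertices in the plane embedding of $\bar G$, under the star labelling $\bar\pi$. Recall that the sign of a star labelling at a degree-$3$ vertex $v$ depends only on whether the labels $0,1,2$ appear clockwise or anticlockwise around $v$ in the fixed plane embedding, so $\sgn(\bar\pi) = (-1)^{c(\bar\pi)}$ where $c(\bar\pi)$ is the number of anticlockwise vertices (or clockwise, depending on the normalization; since we have normalized $\sgn(\overline{\pi^0})=1$, it is cleanest to count the vertices where $\bar\pi$ differs cyclically from $\overline{\pi^0}$). So it suffices to show that $c(\bar\pi)$ is even whenever $D_\pi$ is bipartite.

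First I would recall the correspondence, already noted in the excerpt, between star labellings of $\bar G$ with exponent $w\equiv 2$ and pairs $(M',\vec D')$ consisting of a perfect matching $M'$ and a cyclic orientation $\vec D'$ of the complementary $2$-factor $D'$: the matching edges $\bar e=uv$ are exactly those with $\bar\pi(ue)=\bar\pi(ve)=1$, and on $D'$ each edge is oriented from its $0$-end to its $2$-end. Next I would observe how the ``clockwise/anticlockwise'' status of a vertex $v$ is determined by this data: $v$ meets exactly one matching edge and two $2$-factor edges, and is a source, a sink, or an internal vertex of $\vec D'$ at $v$; the cyclic order of $0,1,2$ around $v$ is then read off from whether the matching edge sits between the two $D'$-edges in the ``incoming-then-outgoing'' sense, so the local sign is a function of (a) the rotation of the three edges at $v$ in the embedding and (b) whether $v$ is a source/sink/internal point of $\vec D'$. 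The key point is that, holding the embedding and $D$ itself fixed, changing the orientation $\vec D'$ on one cycle of $D'$ flips the source/sink pattern along that cycle.

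The main step is then to compare $\bar\pi$ to the reference labelling $\overline{\pi^0}$, which corresponds to $(M,\vec D)$. When $D_\pi$ is bipartite we may first move from $\overline{\pi^0}$ to the labelling with matching $M_\pi$ and $2$-factor $D_\pi$ but some fixed reference orientation $\vec D_\pi^{\,0}$: because all cycles of both $D$ and $D_\pi$ are even, a standard argument using the symmetric difference $M\triangle M_\pi$ (a disjoint union of even alternating cycles in the planar graph $\bar G$) together with the Ellingham--Goddyn-type counting of rotations around a face boundary shows that this changes the number of anticlockwise vertices by an even amount --- this is essentially the content of the original planar computation and is where I would cite or mimic the argument in \cite{ElGo}. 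It remains to handle the choice of orientation: passing from $\vec D_\pi^{\,0}$ to the actual $\vec D_\pi$ amounts to reversing a union of cycles of $D_\pi$, and reversing one even cycle flips the local sign at an even number of its vertices (alternately source and sink along an even cycle, each flipped), so the total parity of anticlockwise vertices is unchanged. Combining, $c(\bar\pi)\equiv c(\overline{\pi^0})=0\pmod 2$, hence $\sgn(\bar\pi)=1$.

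The step I expect to be the real obstacle is the first comparison --- showing that switching the matching across an even alternating cycle $C\subseteq M\triangle M_\pi$ flips an even number of anticlockwise vertices. This is genuinely a planarity fact: it fails for non-planar cubic graphs, and the cleanest route is a face-by-face accounting (a discrete Gauss--Bonnet / rotation-number argument) around the cycle $C$, exactly as carried out by Ellingham and Goddyn; since the excerpt already attributes Lemma \ref{EG} to \cite{ElGo}, in the write-up I would either reproduce that face-counting argument in full or cite it directly, and spend my own effort only on the (easier, purely local) orientation-reversal step, which the earlier draft does not seem to isolate explicitly.
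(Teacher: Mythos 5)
The paper does not actually prove Lemma~\ref{EG}: it is imported verbatim from Ellingham and Goddyn \cite{ElGo}, with no argument given in this manuscript. So there is no internal proof to compare against, and your proposal should be judged on its own merits as a reconstruction of the cited result. As such it is a reasonable outline, and the easy half that you isolate --- reversing the orientation of one cycle of the $2$-factor --- is correct: reversing a cycle swaps the labels $0$ and $2$ at \emph{every} vertex of that cycle (with a cyclic orientation each vertex has in-degree and out-degree $1$, so your ``source/sink'' language is a little off, but the substance is that each vertex gets the transposition $(0\,2)$ applied), hence an even cycle contributes an even number of sign flips. This is exactly the mechanism the paper itself uses, in the odd-cycle direction, in the proof of Proposition~\ref{prop:involution}.

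The genuine gap is the step you yourself flag: passing from $M$ to $M_\pi$ across the alternating cycles of $M\triangle M_\pi$. Everything that makes the lemma true for \emph{planar} graphs lives there, and your proposal supplies no argument for it beyond ``cite or mimic \cite{ElGo}.'' Moreover the reduction as you set it up has a soft spot even as a skeleton: if you switch the matching one alternating cycle of $M\triangle M_\pi$ at a time, the intermediate perfect matchings need not have bipartite complementary $2$-factors, so the claim ``each switch changes the number of anticlockwise vertices by an even amount'' cannot be quoted from the bipartite case of the lemma itself and is not obviously local to the cycle being switched; the Ellingham--Goddyn argument does not proceed by such an induction but by a global planar rotation/face count. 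Since the paper treats this statement as an external black box, citing \cite{ElGo} for it is entirely legitimate --- but then the honest form of your write-up is a citation plus the (correct, easy) orientation-reversal remark, not a proof.
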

\begin{corollary}
\label{cor:sgn}
Let $\pi \in \Pi_0$.
Then $\sgn(\pi) = (-1)^t$, where $t$ is the number of threads $T\in \mathcal T^D$ 
%(of order $1$)
for which $\pi \upharpoonright_{F(T)}=\pi'_{11}$.
\end{corollary}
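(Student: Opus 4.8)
The plan is to evaluate $\sgn(\pi)$ by comparing $\pi$, one vertex at a time, with the reference star labelling $\pi^0$, and to organise this comparison thread by thread. Recall $\sgn(\pi)=\prod_{v\in V(G)}\sgn(\pi_v)$, and note that $V(G)$ is the disjoint union of $V(\bar G)$ with the sets $\mathrm{int}(T)$ of vertices of $G$ lying interior to the general threads $T\in\mathcal T$: here $\mathrm{int}(T)=\{v_1,\dots,v_m\}$ for $T\cong T_m$ or $T_m^-$, $\mathrm{int}(T)=\{v_0,v_1,\dots,v_m\}$ with $v_0=v_{m+1}$ for $T\cong T_m^\circ$ (the vertex $v_0$ has degree $3$ in $G$, its third edge being the cut-edge outside $H^+$), and $\mathrm{int}(T)=\emptyset$ for $T\cong T_0$. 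Put $\sigma(T)=\prod_{v\in\mathrm{int}(T)}\sgn(\pi_v)$ and define $\sigma^0(T)$ analogously for $\pi^0$. Then $\sgn(\pi)/\sgn(\pi^0)=\prod_{v\in V(G)}\sgn(\pi_v)/\sgn(\pi^0_v)$, and grouping the vertices as above this factors as $\bigl(\prod_{v\in V(\bar G)}\sgn(\pi_v)/\sgn(\pi^0_v)\bigr)\cdot\prod_{T}\sigma(T)/\sigma^0(T)$. Since each ratio $\sgn(\pi_v)/\sgn(\pi^0_v)$ is the sign of a permutation of the three flags at $v$, hence independent of any edge ordering, and since for $v\in V(\bar G)$ the derived labellings $\bar\pi,\overline{\pi^0}$ give the corresponding flags of $\bar G$ at $v$ the same labels (and $\bar G$ carries the rotation inherited from $G$), the first factor equals $\sgn(\bar\pi)/\sgn(\overline{\pi^0})$. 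Using the normalisation $\sgn(\pi^0)=\sgn(\overline{\pi^0})=1$ we obtain $\sgn(\pi)=\sgn(\bar\pi)\cdot\prod_{T\in\mathcal T}\sigma(T)/\sigma^0(T)$.

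Because $\pi\in\Pi_0$ the $2$-factor $D_\pi$ is bipartite, so $\sgn(\bar\pi)=1$ by Lemma~\ref{EG} (consistent with the normalisation, as $D_{\pi^0}=D$ is bipartite). It remains to evaluate the ratios $\sigma(T)/\sigma^0(T)$. I would first record two points. (i) $\sigma(T)$ depends only on $\pi\upharpoonright_{F(T)}$: for $v\in\mathrm{int}(T)$ at most one flag at $v$ fails to lie in $F(T)$ — a foot of $T$, or (when $v=v_H$ or $v=v_0$) a foot of the parent thread — and by Proposition~\ref{prop:1-footed} such a flag carries label~$1$ for every member of $\Pi$. (ii) When $T$ is a thread or an injured thread, $\sigma(T)$ is exactly the sign of the prestar labelling $\pi\upharpoonright_{F(T)}$ of Section~\ref{sec:threads}, since being $1$-footed forces $\pi(v_kf_k)=w(f_k)-1\in\{1,2\}$ on each foot. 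Now I would run through the classes of \eqref{eq:allThreads} using Corollary~\ref{cor:restrictedSL}, comparing with the reference values, which are those of the case $\mathcal S=\emptyset$ — with $\pi_{02}$ (not $\pi'_{11}$) on every thread of $\mathcal T^D_{\textrm{odd}}$ because $\pi^0\in\Pi_0$ and Lemma~\ref{lem:pi11OddCycle} applies, and $\rho_{02}$ on every thread of $\mathcal T^\circ_{\textrm{odd}}$. The routine cases all give ratio $1$: $\sigma(T)$ is an empty product when $T\cong T_0$; for $T\in\mathcal T_{\ge1}^M\cup\mathcal T^1$ and for $T\in\mathcal T^\circ_{\textrm{even}}$ one has $\pi\upharpoonright_{F(T)}=\pi^0\upharpoonright_{F(T)}$ ($=\pi_{11}$, resp.\ $\pi_{02}$); for $T\in\mathcal S$ we compare $\pi_{20}$ with $\pi_{02}$ on a thread of odd order, so $\sgn(\pi_{20})=\sgn(\pi_{02})$ by Proposition~\ref{prop:threadSigns}; and for $T\in\mathcal T^D_{\textrm{even}}$ we compare two members of $\{\rho_{02},\rho_{20}\}$ on a thread of even order, where passing from $\rho_{20}$ to $\rho_{02}$ reverses the rotation at each of the $m$ internal vertices (an even number of them), so $\sgn(\rho_{02})=\sgn(\rho_{20})$.

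The two remaining cases need care. For $T\in\mathcal T^\circ_{\textrm{odd}}$, $\pi\upharpoonright_{F(T)}$ and $\pi^0\upharpoonright_{F(T)}$ again lie in $\{\rho_{02},\rho_{20}\}$, but now passing from $\rho_{20}$ to $\rho_{02}$ reverses the rotation not only at $v_1,\dots,v_m$ but also at $v_0$ (there $0$ and $2$ are transposed on the head and tail edges, the third flag keeping label~$1$): that is $m+1$ vertices, and $m+1$ is even because a closed thread of odd order arises from an even cycle, so the ratio is $1$. Finally, for $T\in\mathcal T^D_{\textrm{odd}}-\mathcal S$ we have $\pi\upharpoonright_{F(T)}\in\{\pi_{02},\pi'_{11}\}$ and $\pi^0\upharpoonright_{F(T)}=\pi_{02}$; if $\pi\upharpoonright_{F(T)}=\pi_{02}$ the ratio is $1$, while if $\pi\upharpoonright_{F(T)}=\pi'_{11}$ then $m=1$ (the labelling $\pi'_{11}$ exists only on $T_1$, by Lemma~\ref{lem:possibleStars}), so $\sigma(T)=\sgn(\pi'_{11})$ and $\sigma(T)/\sigma^0(T)=\sgn(\pi'_{11})/\sgn(\pi_{02})=-1$ by Proposition~\ref{prop:threadSigns}; and such a $T$ belongs to $\mathcal T^D$. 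Since $\pi'_{11}$ occurs on no other thread, multiplying all the ratios gives $\prod_T\sigma(T)/\sigma^0(T)=(-1)^t$, hence $\sgn(\pi)=\sgn(\bar\pi)(-1)^t=(-1)^t$.

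The step I expect to be the main obstacle is the bookkeeping around the distinguished vertex $v_0$ of a closed thread: it is a genuine degree-$3$ vertex of $G$, so it contributes to $\sgn(\pi)$, yet it is not among the vertices whose signs make up the prestar sign of $T_m^\circ$, so the number of rotation-reversing vertices between $\rho_{02}$ and $\rho_{20}$ is $m+1$ rather than $m$; getting this to match the parity convention ($\mathcal T^\circ_{\textrm{odd}}$ meaning $m$ odd, equivalently even cycle length) is the one point where a careless argument would fail. A secondary point is making the identification $\prod_{v\in V(\bar G)}\sgn(\pi_v)\leftrightarrow\sgn(\bar\pi)$ precise, which is cleanest to do through ratios against the reference labellings $\pi^0,\overline{\pi^0}$, as above.
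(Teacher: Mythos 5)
Your argument is correct and follows the paper's own route: factor $\sgn(\pi)$ as $\sgn(\bar\pi)$ times per-thread contributions, dispose of $\sgn(\bar\pi)$ with Lemma~\ref{EG}, and compare each thread's contribution against the reference labelling $\pi^0$ via Proposition~\ref{prop:threadSigns}. Your explicit bookkeeping at the degree-$3$ vertex $v_0$ of a closed thread in $\mathcal T^\circ_{\textrm{odd}}$ --- where the $m$ internal rotation reversals together with the extra transposition at $v_0$ give ratio $+1$ --- is in fact more careful than the paper's bare appeal to Proposition~\ref{prop:threadSigns} (which as stated only covers even $m$); that is the one spot where the published proof is imprecise, and your version resolves it.
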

\begin{proof}
We have $\sgn(\pi) = \sgn(\bar\pi) \cdot \prod \left\{ \, \sgn\left(\pi \upharpoonright _{F(T)}\right) \;\vert\; T \in \mathcal T \,\right\}$.
The set of prestar labellings of $T \in \mathcal T$ whose exponents are determined by a weighting in $\mathcal W$ are restricted according to statements (1) to (6) of Corollary \ref{cor:restrictedSL}.
By Proposition \ref{prop:threadSigns}, 
both prestar labellings listed in statement \eqref{choice:2} have the same sign,
whereas the two labellings in statement \eqref{choice:02} have opposite sign.
In statements \eqref{choice:11}, \eqref{choice:20} and \eqref{choice:02circ}, the prestar labelling is fixed,
and in \eqref{choice:0} the sign is $1$ since the threads there are trivial.
By Lemma~\ref{EG}, $\sgn(\bar\pi)=1$ for $\pi \in \Pi_0$.
The result follows from the facts that  $t=0$ for $\pi=\pi^0$, and that $\sgn(\pi^0)=1$.
\end{proof}

Let $\Pi_1 = \Pi- \Pi_0$.
We now define a particular function $f $ which maps each member of $ \Pi_1$  to another star labelling of $G$.
We fix an arbitrary total ordering of the set of odd cycles in $\bar G$.
For $\pi \in \Pi_1$, let $C$ be the first odd cycle which is a component of $\bar G[D_\pi]$.
Every general thread  $T \in \{T_{\bar e} \;\vert\; e \in D_\pi\}$
has type $(0,2)$ or type $(2,0)$,
so we have $\pi \upharpoonright_{F(T)} \in \{\pi_{20}, \pi_{02}, \rho_{20}, \rho_{02}\}$,
and one of the cases \eqref{choice:20}, \eqref{choice:02}, \eqref{choice:2} or \eqref{choice:0} of Corollary \ref{cor:restrictedSL} applies to $T$.
(We used the fact $\pi'_{11}$ has type $(1,1)$.)
Let $f(\pi)$ be the star labelling in $\Pi_1$ obtained from $\pi$ as follows.
For every $\bar e \in E(C)$ we do the following.
If $T = T_{\bar e}$ is trivial, then we interchange the labels $0$ and $2$ on its two flags.
Otherwise, we relabel the flags of $T$ in a way that
interchanges either
the prestar labellings $\pi_{20}$ and $\pi_{02}$ (if $T\in \mathcal T^D_{\textrm{odd}}$),
or the prestar labellings $\rho_{20}$ and $ \rho_{02}$ (if $T\in \mathcal T^D_{\textrm{even}}$).
More precisely, for $\{i,j\}=\{0,2\}$, if $\pi \upharpoonright_{F(T)} = \pi_{ij}$, then $f(\pi) \upharpoonright_{F(T)} = \pi_{ji}$,
 and if $\pi \upharpoonright_{F(T)} = \rho_{ij}$, then $f(\pi) \upharpoonright_{F(T)} = \rho_{ji}$.

\begin{proposition}
\label{prop:involution}
The map $f$ is a fixed-point free involution $f : \Pi_1\to \Pi_1$ which satisfies  $\sgn(f(\pi)) = - \sgn(\pi)$.
\end{proposition}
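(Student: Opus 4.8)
The plan is to verify the three claimed properties of $f$ in turn: that $f$ maps $\Pi_1$ into $\Pi_1$, that $f$ is an involution without fixed points, and that $f$ reverses sign. The first two are largely bookkeeping; the sign computation is where the real content lies.

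First I would check that $f(\pi)\in\Pi$. The operation on each thread $T_{\bar e}$ with $\bar e\in E(C)$ swaps a type-$(2,0)$ prestar labelling for a type-$(0,2)$ one (or vice versa), and leaves all feet $1$-footed; since $w_{20}$ and $w_{02}$ agree off the head/tail edges and $w_{\bf 2}$ is symmetric, the exponent of $f(\pi)$ restricted to $E(T)$ is still one of the weightings allowed by \eqref{eq:wDef}. On the edges $\bar e\notin E(C)$ nothing changes. One must confirm $f(\pi)$ is a genuine star labelling of $G$, i.e. that at each vertex $v_{\bar e}\cap v_{\bar e'}$ of $\bar G$ on $C$ the two incident head/tail labels, now each flipped between $0$ and $2$, still combine with the third flag's label to give a bijection onto $\{0,1,2\}$ — this holds because at a vertex of $C$ the two $C$-edges contribute labels from $\{0,2\}$ and the non-$C$ edge contributes $1$ (the non-$C$ edge lies in $M_\pi$ if it is a matching edge, or is the other $2$-factor flag, but in all cases a parity/counting check using that each $v_{\bar e}$ sees $0+1+2=3$ forces the third label to be $1$, and flipping $0\leftrightarrow2$ preserves the set). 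Then, the new matching $M_{f(\pi)}$ differs from $M_\pi$ only in that the edges of $C$ have been re-signed, but $C$ is a cycle, so flipping orientations along $C$ changes $M_\pi$ on no edge at all — $M_{f(\pi)}=M_\pi$ — hence $D_{f(\pi)}=D_\pi$ is still non-bipartite (it still contains the odd cycle $C$), so $f(\pi)\in\Pi_1$. Since $M$ is unchanged and $D_\pi=D_{f(\pi)}$, the chosen first odd cycle $C$ of $\bar G[D_{f(\pi)}]$ is the same $C$, so applying $f$ again flips every thread of $C$ back: $f(f(\pi))=\pi$. That $f$ has no fixed point is immediate since $C$ is nonempty and each thread of $C$ is genuinely altered (a type-$(2,0)$ labelling is never equal to a type-$(0,2)$ labelling, and swapping $0\leftrightarrow2$ on a nontrivial flag pair is never the identity).

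The sign statement is the heart of the argument, and the expected main obstacle. I would write $\sgn(\pi)=\sgn(\bar\pi)\cdot\prod_{T\in\mathcal T}\sgn\!\left(\pi\!\upharpoonright_{F(T)}\right)$ and compare term by term with $f(\pi)$. For a thread $T=T_{\bar e}$ with $\bar e\in E(C)$: if $T$ is trivial we only swap $0\leftrightarrow2$ on two flags, contributing nothing to the thread product; if $T$ has odd order, swapping $\pi_{20}\leftrightarrow\pi_{02}$ (or $\rho_{20}\leftrightarrow\rho_{02}$) preserves the thread sign by Proposition~\ref{prop:threadSigns}; if $T$ has even order at least $2$, swapping $\rho_{02}\leftrightarrow\rho_{20}$ again preserves the sign by Proposition~\ref{prop:threadSigns}. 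So $\prod_{T}\sgn\!\left(\pi\!\upharpoonright_{F(T)}\right)$ is unchanged, and the entire sign change must come from the derived labelling: $\sgn(f(\pi))/\sgn(\pi)=\sgn(\overline{f(\pi)})/\sgn(\bar\pi)$. Now $\overline{f(\pi)}$ is obtained from $\bar\pi$ by reversing the orientation of the odd cycle $C\subseteq\vec D_\pi$, i.e. interchanging the labels $0$ and $2$ on the two flags of each edge of $C$ and on no other flags. The key local computation is: at each vertex $v$ of $C$, exactly two of its three flags have their labels permuted (the transposition $0\leftrightarrow2$ within $\{0,1,2\}$), so $\sgn(\bar\pi_v)$ is negated at each of the $|V(C)|=|E(C)|$ vertices of $C$, and unchanged elsewhere; since $C$ is an odd cycle, $\sgn(\overline{f(\pi)})=(-1)^{|E(C)|}\sgn(\bar\pi)=-\sgn(\bar\pi)$. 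Combining, $\sgn(f(\pi))=-\sgn(\pi)$, as required.

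The one subtlety to be careful about is the interaction at a vertex of $C$ where one of the incident non-$C$ flags might itself be a head or tail of a thread that is \emph{not} re-labelled — but such a flag is a matching-edge flag carrying label $1$, which is fixed by $0\leftrightarrow2$, so the local permutation at $v$ really is a single transposition and the sign-negation count of exactly $|E(C)|$ is correct. I expect the only real work is assembling this local sign analysis cleanly and invoking Proposition~\ref{prop:threadSigns} in each of the three thread-type cases; everything else is routine verification that the objects produced still lie in $\Pi_1$.
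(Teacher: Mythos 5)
Your proposal is correct and follows essentially the same route as the paper: show the exponent of $f(\pi)$ is still a weighting in $\mathcal W$ with $D_{f(\pi)}=D_\pi$ (so $f(\pi)\in\Pi_1$ and the same cycle $C$ is selected again, giving a fixed-point free involution), observe that the thread-sign product is preserved by Proposition~\ref{prop:threadSigns}, and obtain the sign reversal from the transposition $(0\,2)$ acting at each of the odd number of vertices of $C$ in the derived labelling. Your extra verifications (that $f(\pi)$ is a genuine star labelling, and that the non-$C$ flag at each vertex of $C$ carries the label $1$ fixed by the swap) are details the paper leaves implicit but are consistent with its argument.
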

\begin{proof}
Let $\pi \in \Pi_1$ and let $w_{\mathcal S}$ be the exponent of $\pi$.
Then  the exponent of $f(\pi)$ is the weighting $w_{\mathcal S'} \in \mathcal W$ where $\mathcal S' $ is the symmetric difference
of $\mathcal S$ and $\{\bar e \in E(C) \;\vert\; T_{\bar e} \in \mathcal T^D_\text{odd}\}$.
Therefore we have $f(\pi) \in \Pi_1$.
Clearly $f(\pi) \ne \pi$ and $f(f(\pi))=\pi$ so $f$ is a fixed-point free involution on  $\Pi_1$.
For $v\in V(C)$, the star labellings $\pi_v$ and $f(\pi)_v$ differ by the transposition $(02)$,
whereas $\pi_v = f(\pi)_v$ for every $v\in V(\bar G)-V(C)$.
Since $C$ has odd length, the derived star labellings therefore satisfy  $\sgn(\overline {f(\pi)}) = - \sgn(\overline \pi)$.
The result now follows from Corollaries~\ref{cor:sgn} and~\ref{prop:threadSigns}.
\end{proof}
We note that the oriented $2$-factor $\vec D_{f(\pi)}$ is obtained from $\vec D_\pi$ by reversing all the arcs in
the odd cycle $C$.

\section{The Main Theorem}

\begin{proof}[Proof of Theorem \ref{thm:main}]
Let the set of edge weights $\mathcal W$ be defined as in Section \ref{sec:weightSet},
and let $\Pi = \Pi_0 \cup \Pi_1$ be the star labellings of $G$ with exponent in $\mathcal W$, as defined in Section \ref{sec:starLabellings}.
Let $\pi \in \Pi_0$.
Then $D_\pi$ is a bipartite $2$-factor of $\bar G$.
Applying Lemma \ref{lem:pi11OddCycle}, we conclude that no general thread $T$ satisfies
$\pi \upharpoonright_{F(T)}=\pi'_{11}$.
% as in (7) of Corollary~\ref{cor:restrictedSL}.
It follows from Corollary \ref{cor:sgn} that $\sgn(\pi)=1$ for every $\pi \in \Pi_0$.
We have that $\Pi \ne \emptyset$, since $\Pi$ contains the reference star labelling~$\pi^0$.
Therefore $\sum_{\pi \in \Pi_0} \sgn(\pi) >0$.
We have by Proposition~\ref{prop:involution} that $\sum_{\pi \in \Pi_1} \sgn(\pi) =0$.
Thus we have shown that $\sum_{\pi \in \Pi} \sgn(\pi) > 0$.

Applying Corollary \ref{cor:interp} we have that $G$ is $(w+\mathbf 1)$-edge choosable for some $w=w_{\mathcal S} \in \mathcal W$.
We are interested in the upper bound $s(G,3) \le  |w^{-1}(3)| $. Suppose $G$ has $b$ cut-edges. 
For any general thread $T \in \mathcal T$, let 
\[
m(T) = \begin{cases}
                m        &   \text{if $T \cong T_m$} \\
                m+1    &   \text{if $T \cong T^\circ_m$} \\
                3          &  \text{if $T = H^+$ and $H$ is a vertex block of $G$}.
             \end{cases}
\]
Let $(i,j) \in \{(1,1),(2,0),(0,2)\}$. 
Define $\mathcal T_{ij} = \{ T \in \mathcal T \;\vert\;  w \upharpoonright_{E(T)} = w_{ij}\}$, 
let $n_{ij} = |\mathcal T_{ij}|$, and 
let $m_{ij} = \sum \{m(T)\;\vert\; T \in \mathcal T_{ij} \}$.  
Since every thread in $\mathcal T_{ij}$ has positive length we have $m_{ij} \ge n_{ij}$.  
Let $e$ be a cut-edge of $G$.  There are exactly two general threads  $T, T' \in \mathcal T_{11} \cup \mathcal T_{02} \cup \mathcal T_{20}$
such that $e$ joins a vertex of degree $\ge 2$ in $T$ to a vertex of degree $\ge2$ in $T'$.
Therefore each cut-edge $e$ contributes exactly twice to the quantity $m_{11}+m_{02}+m_{20}$, so
\begin{equation} \label{eq:nmBound}
n_{11} + n_{02} + n_{20} \le m_{11} + m_{02} + m_{20} =2b.
\end{equation}
Furthermore, at least one of the two contributions of $e$ goes toward $m_{11}$,
because the thread in $\{T, T'\}$ that lies farther from the root block $H_0$ is always a member of $\mathcal T_{11}$.
Therefore $m_{11} \ge m_{02}+ m_{20}$.
 
By examining  Figure~\ref{fig:threadWeights} we find that 
\begin{equation} \label{eq:nBound}
|w^{-1}(3)| = n_{11} + n_{02} + 2n_{20}.
\end{equation}
By comparing \eqref{eq:nmBound} and \eqref{eq:nBound},
we deduce that $s(G,3) \le 2n_{11} + 2n_{02} + 2n_{20} \le 4b$.
To obtain the claimed upper bound of $\frac52b$, we must argue more carefully. 

Let $w'$ be the edge weighting of $G$ obtained from $w=w_\mathcal S$ by interchanging the head and tail of every thread $T \in \mathcal T^D_\text{odd}$
(see \eqref{eq:orientation} in Section \ref{sec:weightSet}),
 and then swapping the roles of ``$\mathcal S$'' and ``$(\mathcal T^D_\text{odd} - \mathcal S)$''
in the definition \eqref{eq:wDef} of $w_\mathcal S$.
More precisely, the weightings $w$ and $w'$ are identical, except that for every thread $T \in \mathcal T^D_\text{odd}$, exactly one of the restricted weightings in
$\{w \upharpoonright_{E(T)}, w' \upharpoonright_{E(T)}\}$ coincides with $w_{02}$, and the other coincides with $w_{20}$ after exchanging the head and tail of $T$.
Then the coefficients of $x^w$ and $x^{w'}$ in $\epsilon(G)$ are equal in absolute value.  This is because there is a natural bijection from
the star labellings of $G$ with exponent $w$ to those with exponent $w'$.  
In the bijection, each star labelling $\pi$ with exponent $w$ maps to the unique star labelling $\pi'$ with exponent $w'$ which is identical on all flags outside of any thread in $\mathcal T^D_\text{odd}$, and for which the reduced star labellings of $\bar G$
satisfy 
$\bar \pi =\overline {\pi'}$. 

We now compute $s(G,3) \le \min (|w^{-1}(3)|, |(w')^{-1}(3)|) \le \frac12 (|w^{-1}(3)|+ |(w')^{-1}(3)|)$.
The analogue of \eqref{eq:nBound} is that $|(w')^{-1}(3)| = n_{11} + 2n_{02} + n_{20}$. We sum these two equations.
\[
2 s(G,3) \le 2n_{11} + 3n_{02} + 3n_{20} 
%\le 2(n_{11} + n_{02} + n_{20}) + (n_{02} + n_{20}) 
\le 4b +  (n_{02} + n_{20}) .
\]
We apply the inequality just before \eqref{eq:nBound}.
\[
2 (n_{02} + n_{20}) \le 2 (m_{02} + m_{20}) \le m_{11}+m_{02} + m_{20} = 2b.
\]
So $2 s(G,3) \le 5b$.  That is to say, that one of the two functions, $f = w + \mathbf 1$ or $f = w' + \mathbf 1$, satisfies the statement of Theorem \ref{thm:main}.
\end{proof}

\section{Comments}
Our use of the weightings $w_{\mathbf 2}$ in the definition \eqref{eq:wDef} of $w_\mathcal S$
 was included as a mild attempt to minimize the number of
edges of $G$ receiving weight $3$. 
In particular, extended cycle blocks of even length, (open)  threads of even order, and edges in trivial threads do not require lists of length $4$.
By inspecting the above inequalities, it is apparent that the upper bound $s(G,3)\le \frac52 b$ can be improved 
if many threads of $G$ have length $>1$, or if $\bar G$ has a proper $3$-edge colouring in which most of the edges coming from nontrivial threads have the same colour.
It would be very interesting to improve the bounds  $2b \le s(\text{planar cubic}, 3) \le\frac52b$,
where $s(\text{planar cubic},3) = \sup\{ \:s(G,3) \;\vert\; G \text{ is planar and cubic} \}$.
Similar questions can be asked regarding regular planar graphs of higher degree.

\end{document}